\documentclass{amsart}

\usepackage[utf8]{inputenc}
\usepackage[T1]{fontenc}
\usepackage{verbatim}
\usepackage{graphicx}
\usepackage{graphicx,caption2,psfrag,float,color}
\usepackage{amssymb}
\usepackage{amscd}
\usepackage{amsmath}


\usepackage[T1]{fontenc}

\newtheorem{theorem}{Theorem}[section]

\newtheorem{lemma}[theorem]{Lemma}

\numberwithin{equation}{subsection}
\newtheorem{definition}[theorem]{Definition}




\pagestyle{plain}
\title{The ternary Goldbach problem with a prime and two isolated primes}
\author{Helmut Maier and Michael Th. Rassias}
\date{\today}
\address{Department of Mathematics, University of Ulm, Helmholtzstrasse 18, 89081 Ulm, Germany.}
\email{helmut.maier@uni-ulm.de}
\address{Institute of Mathematics, University of Z\"urich, Winterthurerstrasse 190, CH-8057 Zürich, Switzerland  \& Institute for Advanced Study, Einstein Drive, Princeton, New Jersey 08540 USA}
\email{michail.rassias@math.uzh.ch, michailrassias@math.princeton.edu}
\thanks{}

\begin{document}

 \maketitle
 
\begin{abstract} In the present paper we prove that under the assumption of the GRH (Generalized Riemann Hypothesis) each sufficiently large odd integer can be
expressed as the sum of a prime and two isolated primes.\\


\end{abstract}

\section{Introduction}
In 1937 Vinogradov \cite{vino} proved that each sufficiently large odd integer is the 
sum of three primes. For a step by step exposition of Vinogradov's theorem see 
\cite{rassias}. Recently Helfgott \cite{helfgott} (see also \cite{helfgottplatt}) proved
the result that all odd integers greater than five have this property. Wirsing \cite{wirsing}, motivated by earlier work of Erd\H{o}s and Nathanson \cite{erdos}
 on sums of squares, considered the question of whether one could find thin subsets
 $S$ of primes, which are still sufficient to obtain all sufficiently large odd integers as
 sums of three of them and obtained the answer that there exist such sets $S$ with
 $$\sum_{p\leq x, p\in S}1\ll(x\log x)^{1/3}\:.$$
 Wirsing's result used probabilistic methods and did not lead to a subset of the 
 primes, which is constructive. Wolke in an Oberwolfach conference in 1986
 suggested to find more familiar thin sets of primes. This was achieved by Balog
and Friedlander \cite{balog} who merged the result of Vinogradov with that of 
Piatetski-Shapiro \cite{Piatetski}, who considered sets
$$P_{C}=\{ p\::\:p\: \text{prime},\ p=[n^C]\}$$
with a fixed constant $C>1$.\\
Piatetski-Shapiro \cite{Piatetski} proved that 
$$\sum_{\substack{n\leq x \\ [n^C]\: prime}}1=(1+o(1))\:\frac{x}{C\:\log x}$$
holds in the range $1<C<12/11$, which later was improved by many authors.\\
Balog and Friedlander proved (\cite{balog}, Corollary 1):

\textit{For any fixed $C$ with $1< C<21/20$ every sufficiently large odd integer can be written as the sum of three primes from $P_C$.}

\noindent They also prove more general results in which the three primes are taken from sets $P_C$ with possibly different values of $C$.\\
Here we consider another special set of primes: \textbf{isolated primes}.
\begin{definition}
Let $g\::\:\mathbb{N}\rightarrow [1,\infty)$ be a monotonically increasing function with $g(n)\rightarrow \infty$ for $n\rightarrow \infty$. We say that a prime $p$ is \textbf{$g$-isolated}, if $m$ is composite for all positive integers $m$ with 
$$0<|p-m|\leq (\log p)g(p)\:.$$
\end{definition}
This concept of isolation is closely linked to the question of large gaps between primes. Let $p_n$ be the $n$th prime. For which functions $g$ do we have:
\[
p_{n+1}-p_n\geq (\log p_n)g(p_n)\tag{1.1}
\]
infinitely often?\\
After Westzynthius \cite{Westzynthius} had shown that (1.1) holds for $g$ being an arbitrarily large positive constant, further progress was achieved by variations of the Erd\H{o}s-Rankin method (\cite{erdos2}, \cite{Rankin}). Rankin \cite{Rankin}
could show that (1.1) holds with
\[
g(p_n)=C\:\frac{\log_2 p_n\: \log_4 p_n}{\log_3^2 p_n}\:,\tag{1.2}
\]
($C$ a positive constant, $\log_1 n=\log n$, $\log_k n=\log(\log_{k-1} n)$).\\
For a long period of time the improvements of the result (1.2) only involved the constant $C$ (\cite{maier}, \cite{Pintz}, \cite{Rankin2}, \cite{Schonhage}). A famous
prize problem of Paul Erd\H{o}s was the improvement of the order of magnitude of the function $g$. This was achieved only recently (\cite{ford}, \cite{ford2}, \cite{Maynard}). The latest
result (\cite{ford2}) is:
$$g(p_n)=C\:\frac{\log_2 p_n\: \log_4 p_n}{\log_3 p_n}\:. $$
We shall use this function $g$ in our definition of isolated primes, which in the sequel we shall simply call isolated.
\begin{definition}
Let $C$ be a fixed positive constant, which we assume to be sufficiently small. We say, a prime number $p$ is \textbf{isolated} if $\log_4 p\geq 1$ and $m$ is composite
for all integers $m$ with
$$0<|p-m|\leq C\: \log p\:\frac{\log_2 p\:\log_4 p}{\log_3 p}\:.$$
\end{definition}
Our result is the following:
\begin{theorem}\label{main}
Assume the Generalized Riemann Hypothesis. Then each sufficiently large odd integer is a sum of a prime and two isolated primes.
\end{theorem}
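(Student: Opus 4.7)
My approach is the Hardy--Littlewood circle method. Set
\[
S(\alpha) \;=\; \sum_{p \le N} (\log p)\, e(\alpha p), \qquad T(\alpha) \;=\; \sum_{\substack{p \le N \\ p \text{ isolated}}} (\log p)\, e(\alpha p),
\]
so that the weighted representation count
\[
R(N) \;=\; \int_0^1 S(\alpha)\, T(\alpha)^2\, e(-N\alpha)\, d\alpha
\]
records the sums $N = p_1 + p_2 + p_3$ with $p_1$ prime and $p_2, p_3$ isolated. I would dissect $[0,1]$ into Farey major arcs $\mathfrak M$ about rationals $a/q$ with $q \le (\log N)^B$ ($B$ a large constant) and complementary minor arcs $\mathfrak m$; proving $R(N) > 0$ for every sufficiently large odd $N$ yields the theorem.

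On the minor arcs I would bound
\[
\left|\int_{\mathfrak m} S\, T^2\, e(-N\alpha)\, d\alpha\right| \;\le\; \Bigl(\sup_{\alpha \in \mathfrak m} |S(\alpha)|\Bigr) \int_0^1 |T(\alpha)|^2\, d\alpha,
\]
combining Vinogradov's estimate $\sup_{\mathfrak m}|S| \ll N(\log N)^{-A}$ (valid for any $A$ and sharper under GRH) with the Parseval bound $\int|T|^2 \le \int|S|^2 \ll N\log N$. The resulting minor arc contribution $\ll N^2(\log N)^{1-A}$ is harmless once $A$ exceeds the logarithmic loss in the main term.

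The heart of the argument is on the major arcs, where one must establish
\[
T\!\left(\frac{a}{q}+\beta\right) \;=\; \frac{\mu(q)}{\varphi(q)}\, J^*(\beta) + E(q,\beta), \qquad J^*(\beta) = \int_2^N e(\beta t)\, d\pi^*(t),
\]
with $E$ small enough to be absorbed. Writing $T = S - U$ for $U$ the exponential sum over non-isolated primes reduces the task to an analogous expansion for $U$, which is a weighted sum over prime pairs $(p, p+h)$ with $0 < |h| \le y := C\log N \cdot \log_2 N \log_4 N/\log_3 N$. Under GRH I would attack these by a Selberg (or Brun--Hooley) sieve in arithmetic progressions uniformly in $q \le (\log N)^B$, extracting a main term that factors as $\mu(q)/\varphi(q)$ times an archimedean piece, and summing the resulting errors over $h$. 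Plugging the expansion back into $R(N)$ and summing over $a \bmod q$ reconstructs the ternary Goldbach singular series $\mathfrak S(N) \gg 1$ (for odd $N$) multiplied by an archimedean main term of order $\pi^*(N)^2$, where $\pi^*(N) \gg \pi(N)(\log N)^{-o(1)}$ is the count of isolated primes up to $N$; the latter lower bound I would obtain by grafting the Maier matrix method onto the Ford--Green--Konyagin--Maynard--Tao long-gap construction.

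The principal obstacle, and the reason GRH is essential, is quantitative: $\pi^*$ trails $\pi$ by only a slow logarithmic factor, so the sieve error for the prime-pair counts must be genuinely smaller than $\pi^*(N)/\varphi(q)$, leaving almost no slack. Matching the long window $|h| \le y$ against the GRH-level distribution of primes in arithmetic progressions, so that the accumulated $h$-sum of sieve errors stays below the main term, is where I expect the bulk of the analytic work to concentrate.
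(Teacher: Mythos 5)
Your approach tries to carry out the circle method directly on the exponential sum $T(\alpha)$ over isolated primes, but this runs into several obstructions that appear fatal. First, the lower bound $\pi^*(N) \gg \pi(N)(\log N)^{-o(1)}$ that your main term requires is not a consequence of the Ford--Green--Konyagin--Maynard--Tao work (or of a Maier-matrix graft onto it): those results produce \emph{some} gaps of the required size infinitely often, but give nothing like a positive-proportion or even $(\log N)^{-o(1)}$-density lower bound for the set of primes enjoying such large gaps, and no such lower bound is known even under GRH. Second, the proposed major-arc expansion $T(a/q+\beta) \approx \frac{\mu(q)}{\varphi(q)} J^*(\beta)$ posits that isolated primes equidistribute among residue classes with the classical singular-series density. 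That is almost certainly false: the isolation condition is strongly correlated with the residue of $p$ modulo small primes $q$ (a residue $a \pmod q$ with $a$ small forces $p-a$ composite and thus \emph{helps} isolation), so the local densities are far from $\mu(q)/\varphi(q)$. Third, writing $T=S-U$ and sieving $U$ requires estimating $U$ to an accuracy much finer than $T(0)$ itself, since $U$ accounts for all but a thin subset of primes; Selberg or Brun--Hooley sieves only give upper bounds (not asymptotics with the right constant) for the prime-pair counts that compose $U$, and no known Riemann-Hypothesis input controls prime pairs at the level you would need after summing the errors over the roughly $\log N \cdot \log_2 N$ shifts $h$.

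The paper avoids all of these problems by never attempting to count or equidistribute isolated primes at all. Instead (Section 2) it constructs, using the Ford--Green--Konyagin--Maynard--Tao sieve weights, a modulus $P^*$ of size $N^{1/D}\leq P^*\leq N^{1/100}$ and a single residue class $u_0 \pmod{P^*}$ that is ``pre-isolated'': every integer within the target window of $u_0$ modulo $P^*$ is divisible by some prime factor of $P^*$, while $(u_0,P^*)=(N-2u_0,P^*)=1$. It then applies the Liu--Zhan ternary Goldbach theorem in arithmetic progressions (Section 3) to find $p_1\equiv p_2\equiv u_0$ and $p_3\equiv N-2u_0 \pmod{P^*}$ with $p_1+p_2+p_3=N$; here GRH is used only to push the modulus up to a small power of $N$, a genuinely tractable task, and $p_1,p_2$ are automatically isolated because their neighbours share a factor with $P^*$. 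This shift from a counting problem over a thin set to a residue-class construction plus a well-understood circle-method theorem in progressions is the key idea your proposal is missing, and without it the direct exponential-sum route does not appear to close.
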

\textit{Remark.} There are several challenges. It is very likely true that each sufficiently large odd integer is the sum of three isolated primes. It would be worthwhile to find a proof of this fact, possibly with a function $g$ of smaller order of 
magnitude. It also remains the problem to remove the assumption of the Generalized 
Riemann Hypothesis.
\section{Construction of the isolated residue-class}
Let $p_1, p_2$ be two isolated primes in the representation $N=p_1+p_2+p_3$.
In this section we shall construct a modulus $P^*$ being a product of many small
prime numbers. We also construct an ``isolated'' residue-class $u_0(\bmod\:P^*)$,
such that $(u_0, P^*)=(N-2u_0, P^*)=1$ and $(m, P^*)>1$
for $m\neq u_0$ if $|u_0-m|$ is small.\\
The proof of our Theorem \ref{main} will then be concluded in the next section by the
circle method, choosing $p_1$ and $p_2$ from the residue class $u_0(\bmod\:P^*)$
and $p_3$ from the residue class $(N-2u_0)\bmod\:P^*$. From 
$$(p_1+n,P^*)=(p_2+n, P^*)=(u_0+n, P^*)>1\:,$$
it then follows that $p_1$ and $p_2$ are isolated in the sense as defined in the previous section.
More specifically we shall prove in this section:
\begin{theorem}\label{thm2}
Let $C>0$ and $D>0$ be fixed, $C$ sufficiently small and $D$ sufficiently large.
Let $N=N(C,D)$ be a sufficiently large odd integer. Then there is a positive integer $P^*$ with 
$$(P^*)^{100}\leq N\leq (P^*)^D$$
and an integer $u_0$ with 
$$(P^*)^C\leq u_0\leq P^*\:,$$
such that 
$$(u_0, P^*)=(N-2u_0, P^*)=1\ \ \text{and}\ \ (m,P^*)>1$$
for
$$0<|u_0-m|\leq C(\log N)\: \frac{\log_2 N\:\log_4 N}{\log_3 N}\:.$$
\end{theorem}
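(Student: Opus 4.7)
The plan is to set $P^* := \prod_{p \le y} p$ with $y$ a parameter of size about $(\log N)/100$, chosen so that $(P^*)^{100} \le N \le (P^*)^D$; by the prime number theorem $\log P^* = \theta(y) \sim y$, so any $y$ in $[(\log N)/D, (\log N)/100]$ will do. Specifying $u_0 \pmod{P^*}$ then reduces via the Chinese Remainder Theorem to prescribing a residue $a_p := u_0 \bmod p$ at each prime $p \le y$. With this choice one has $\log y \asymp \log_2 N$, $\log_2 y \asymp \log_3 N$, $\log_3 y \asymp \log_4 N$, and the target interval length $H := C(\log N)\log_2 N \log_4 N / \log_3 N$ matches, up to a factor of order $C$, the gap length $y \log y \log_3 y / \log_2 y$ achieved by the Ford--Green--Konyagin--Maynard--Tao large-gaps construction \cite{ford2} at scale $P^*$.

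I would impose three families of conditions on the $a_p$'s. First, the covering: for each $n$ with $0 < |n| \le H$, some $p \le y$ should satisfy $a_p \equiv -n \pmod p$, forcing $(u_0+n, P^*) > 1$. Second, $a_p \not\equiv 0 \pmod p$ for every $p \mid P^*$, yielding $(u_0, P^*) = 1$. Third, $a_p \not\equiv N \cdot 2^{-1} \pmod p$ for every odd $p \mid P^*$, yielding $(N-2u_0, P^*) = 1$; for $p = 2$ this is automatic because $N$ is odd and hence $N - 2u_0$ is odd.

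To realise the $a_p$'s I would invoke the Ford--Tao covering scheme of \cite{ford2}: partition the primes $p \le y$ into small, medium, and large ranges; use the small primes in an Eratosthenes-type sieve (each removing a single admissible residue class from $[-H, H] \setminus \{0\}$), which by Mertens leaves a residual set of size $\asymp H/\log y$; thin that set with the medium primes; and match the remaining $O(y/\log y)$ stubborn integers injectively to distinct large primes in a range like $(y^{1-\varepsilon}, y]$ via the Maynard-style hypergraph covering lemma. The new feature here is that at each prime one or two residues are forbidden by the coprimality requirements. This costs only a Mertens factor of order $1/\log y$ in the density of available choices, which is absorbed by taking $C$ sufficiently small; the hypergraph matching step is unaffected since each large prime retains $\gg p$ admissible residues.

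Finally, take $u_0$ to be the canonical representative of its residue class in $[1, P^*]$. The upper bound is then immediate, and $u_0 \ge (P^*)^C$ follows from the flexibility in the construction: altering $a_p$ at any single medium prime to another admissible residue shifts $u_0$ by a nonzero multiple of $P^*/p \gg (P^*)^{1-o(1)}$, so pigeonhole over such modifications lands $u_0$ well above $(P^*)^C$ for $C$ small. The hard part will be this adaptation of the Ford--Maynard--Tao covering to respect the two forbidden residues per prime: the original proof is already a delicate optimisation, and one has to verify that every stage (the sieve bounds, the weighted sums, and especially the hypergraph covering) survives the restriction. This bookkeeping is the principal technical burden of the section.
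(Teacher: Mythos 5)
Your proposal takes a genuinely different route from the paper's, and the difference is not a matter of bookkeeping but of strategy. The paper never tries to build the covering $(a_p)$ directly under the coprimality constraints. Instead it (i) invokes Ford--Green--Konyagin--Maynard--Tao's Theorem 2 as a \emph{black box} to produce vectors $\vec a,\vec b$ with $d_p=0$ at all primes outside $S\cup\mathcal P$, so that the sieve leaves only $O(x/\log x)$ survivors in $(x,y]$; (ii) shows --- via a Selberg-type sieve whose error term is controlled by GRH (Lemma~\ref{lem25}), plus Brun--Titchmarsh --- that there exists a surviving prime $q_0\in(y/3,2y/3]$ such that $N-2q_0$ has no small prime factor and such that only a negligible set of primes $p$ satisfies $q_0\equiv d_p$ or $p\mid N-2q_0$ (Lemmas~\ref{lem26}--\ref{lem28}); and (iii) \emph{translates}: sets $v_0\equiv-d_p\ (\bmod\ p)$ and $u_0=v_0+q_0$, so that $u_0\equiv q_0-d_p$ and the two coprimality conditions become properties of the single prime $q_0$, which have already been arranged. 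The primes at which the conditions would fail are simply excised from $\mathcal P^*$, and the $O(x/\log x)$ survivors are mopped up by $k=O(x/\log x)$ extra primes $\tilde p_j\in(x,C_0x]$.

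The gap in your plan sits precisely at the point the translation is designed to circumvent. In the Erd\H{o}s--Rankin--Ford et al.\ construction the small primes carry $a_p=0$; that is what makes the survivors of the small-prime stage a structured set (primes and $z$-smooth numbers) that the medium-prime sieve and the hypergraph matching are engineered to attack. Your requirement $(u_0,P^*)=1$ forces $a_p\ne 0$ at every $p\mid P^*$, which destroys this structure: the residual set after the small-prime stage is no longer ``$n$ with no small prime factor'' but ``$n$ avoiding an arbitrary shifted class mod each small $p$.'' Your remark that ``each large prime retains $\gg p$ admissible residues'' is correct but addresses the wrong stage; the obstruction is upstream. You flag this verification as ``the principal technical burden'' without carrying it out, and I do not think it is mere bookkeeping --- it is exactly the obstacle the paper's $q_0$-translation removes. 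Two secondary issues: your lower bound $u_0\ge(P^*)^C$ is a pigeonhole heuristic, whereas the paper gets it for free because $v_0$ is divisible by every prime $p\in\mathcal P^*$ with $d_p=0$; and you never invoke GRH, which the paper does use in Lemma~\ref{lem25} to guarantee enough admissible $q_0$ --- if your route truly avoided GRH that would be worth noting, but as written the proposal does not establish the theorem.
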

Our construction is mainly based on the ideas of the paper \cite{ford2}. Also our
definitions are mainly taken from \cite{ford2}.\\
Let $c_1, c_2$ be fixed positive constants to be specified later. Also the constants $c_3, c_4, \ldots$ will be positive constants. They will depend only on $c_1, c_2$. 
We set
\[
x:=c_1\: \log N\tag{2.1}
\]
\[
y:=c_2x\: \frac{\log x\: \log_3 x}{\log_2 x} \tag{2.2}
\]
\[
z:=x^{\log_3 x/(4\log_2 x)} \tag{2.3}
\]
\begin{definition}\label{def21}
We introduce the three disjoint sets of primes
\[
S:=\{s\ \text{prime}\::\: \log^{20} x< s\leq z\} \tag{2.4}
\]
\[
\mathcal{P}:=\{p\ \text{prime}\::\: x/2< p\leq x\} \tag{2.5}
\]
\[
Q:=\{q\ \text{prime}\::\: x< q\leq y\} \tag{2.6}
\]
For vectors $\vec{a}=(a_s \bmod\: s)_{s\in S}$, $\vec{b}=(b_p \bmod\: p)_{p\in \mathcal{P}}$,
we define the sifted sets
$$S(\vec{a}):=\{n\in\mathbb{Z}\::\: n\not\equiv a_s(\bmod\: s)\ \text{for all}\ s\in S\}$$
and likewise 
 $$S(\vec{b}):=\{n\in\mathbb{Z}\::\: n\not\equiv b_p(\bmod\: p)\ \text{for all}\ p\in \mathcal{P}\}$$
\end{definition}
\begin{definition}\label{def22}
For each prime $p\leq x$ we define:
$$d_p:=\left\{ 
  \begin{array}{l l}
    a_s\:, & \quad \text{if $p=s\in S$}\vspace{2mm}\\ 
     b_p\:, & \quad \text{if $p\in \mathcal{P}$}\vspace{2mm}\\ 
      0\:, & \quad \text{for all other $p$}
  \end{array} \right.
$$
\end{definition}
\begin{lemma}\label{lem21}
Let $n\in(x,y]$ satisfy $n\not\equiv d_p(\bmod\: p)$ for all $p\leq x$. Then 
\mbox{$n\in Q\cap S(\vec{a})\cap S(\vec{b})$} or $n\in R$, where
$$R:=\{n\in(x, y]\::\: p\mid n \Rightarrow p\leq z\}$$
is the set of $z$-smooth integers in $(x, y]$.
\end{lemma}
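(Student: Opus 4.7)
The plan is a case split on the prime factorization of $n$, driven by Definition \ref{def22}: for most primes $p\leq x$ the value $d_p$ equals $0$, so the hypothesis forces $p\nmid n$. Concretely, I would first unpack $n\not\equiv d_p\pmod p$ according to the three ranges in Definition \ref{def21}. For $p\leq \log^{20}x$ and for $z<p\leq x/2$ one has $d_p=0$, hence $p\nmid n$; for $p\in S$ the hypothesis is exactly the sieve condition $n\not\equiv a_p\pmod p$; and for $p\in\mathcal{P}$ it is $n\not\equiv b_p\pmod p$. In particular every prime divisor of $n$ must lie in $(\log^{20}x,z]\cup(x/2,x]\cup(x,\infty)$.

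If every prime divisor of $n$ is $\leq z$, then $n$ is $z$-smooth and, since $n\in(x,y]$, we have $n\in R$ and are done. Otherwise $n$ admits a prime divisor $p>z$, which by the preceding paragraph in fact satisfies $p>x/2$. Write $n=pm$; since $n\leq y$ we get $m<2y/x$, and from (2.1)--(2.2),
\[
\frac{2y}{x}=2c_2\,\frac{\log x\,\log_3 x}{\log_2 x}=o(\log^{20}x).
\]
This is the one quantitative input I need.

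If $m>1$, then $m$ has a prime divisor $q\leq m<\log^{20}x$, and this $q$ also divides $n$, contradicting the restriction on divisors of $n$ established in the first paragraph. Hence $m=1$, so $n=p$ is prime; combined with $n>x$ this forces $p>x$, giving $n\in Q$. The sieve conditions on primes in $S$ and $\mathcal{P}$ then read precisely as $n\in S(\vec{a})\cap S(\vec{b})$, finishing the argument. I do not expect any serious obstacle: once the hypothesis is decoded, the proof is a bookkeeping exercise, and the only numerical input is the trivial bound $2y/x=o(\log^{20}x)$. This also clarifies the role of the threshold $\log^{20}x$ in the definition of $S$: it is chosen comfortably larger than $2y/x$, so that any non-trivial cofactor left after dividing out a prime above $x/2$ is forced to have a prime divisor too small to be admissible.
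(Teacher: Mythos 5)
Your proof is correct and follows essentially the same route as the paper's: the paper argues directly that two prime divisors $t_1<t_2$ of $n$ with $t_2>z$ would force $n\geq t_1t_2>\tfrac{x}{2}\log^{20}x>y$, whereas you reach the same contradiction by writing $n=pm$ and bounding the cofactor $m<2y/x=o(\log^{20}x)$. The two arguments use the identical structural fact (all prime divisors of $n$ avoid $(0,\log^{20}x]$ and $(z,x/2]$) and the identical numerical input ($y<\tfrac{x}{2}\log^{20}x$); your cofactor formulation is if anything slightly cleaner, since it automatically disposes of the prime-power case $n=t^k$, $k\geq 2$, which the paper's "two distinct primes $t_1<t_2$" phrasing passes over silently.
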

\begin{proof}
Assume $n\not\equiv d_p(\bmod\: p)$ for all $p\leq x$ and $n\not\in Q$. Assume there are two primes $t_1, t_2$ with $t_1<t_2, t_2>z$ and $t_1\mid n$, $t_2\mid n$.\\
Then by (2.4), (2.5) we have: 
$$n\geq t_1t_2\geq \frac{x}{2}(\log x)^{20}>y\:,$$
a contradiction. Thus, since $n\not\in Q$, we have $n\in R$.
\end{proof}
\begin{lemma}\label{lem22}
It holds
$$\# R=O\left(\frac{x}{(\log x)^3} \right)\:.$$
\end{lemma}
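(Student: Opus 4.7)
The plan is to bound $\#R$ by the count $\Psi(y,z)$ of $z$-smooth positive integers up to $y$, and then evaluate this via the Dickman--de Bruijn theory of smooth numbers. First I would set $u := (\log y)/(\log z)$. From (2.2) and (2.3) one has $\log y = \log x + O(\log_2 x)$ and $\log z = (\log x)(\log_3 x)/(4\log_2 x)$, so
$$u = \frac{4\log_2 x}{\log_3 x}\,(1+o(1)).$$
These parameters sit very comfortably inside the range of validity of the Hildebrand--de Bruijn asymptotic $\Psi(y,z) = y\,\rho(u)\,(1+o(1))$; for instance $u$ is much smaller than $(\log z)^{3/5-\varepsilon}$.

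Next I would evaluate $\rho(u)$ from the de Bruijn expansion
$$\log(1/\rho(u)) = u\log u + u\log\log u - u + o(u).$$
With $\log u = \log_3 x - \log_4 x + O(1)$ and $\log\log u = \log_4 x + O(1/\log_3 x)$, the $\log_4 x$-contributions from the first two summands cancel, leaving
$$\log(1/\rho(u)) = 4\log_2 x + \bigl(4(\log 4 - 1) + o(1)\bigr)\,\frac{\log_2 x}{\log_3 x},$$
with a \emph{positive} correction term. Hence $\rho(u) = O((\log x)^{-4})$, so
$$\#R \;\le\; \Psi(y,z) \;\ll\; y\,(\log x)^{-4} \;=\; O\!\left(\frac{x\log_3 x}{\log_2 x\,(\log x)^3}\right) \;=\; o\!\left(\frac{x}{(\log x)^3}\right),$$
which is even stronger than the claimed bound.

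The only delicate point is that $u$ is itself only of order $\log_2 x$, so one must check that the unspecified $o(u)$ in the de Bruijn expansion is indeed smaller in absolute value than the positive main correction $4(\log 4-1)\log_2 x/\log_3 x$ for $x$ large; this is immediate since the former is $o(\log_2 x/\log_3 x)$. A conceptually cleaner alternative that avoids any reference to the precise shape of $\rho$ is Rankin's trick: taking $\sigma = 1-(\log u)/(\log z)$ one has $\Psi(y,z) \le y^\sigma\prod_{p\le z}(1-p^{-\sigma})^{-1}$, and a Mertens-type estimate of the Euler product yields $\Psi(y,z) \ll y\,u^{-u(1+o(1))}$ directly, giving the same conclusion within the elementary framework of the rest of the paper. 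Either way, the main obstacle here is bookkeeping of slowly varying functions, not genuine mathematical depth.
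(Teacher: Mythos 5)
Your proof is correct and follows essentially the same route as the paper: bound $\#R$ by $\Psi(y,z)$, compute $u=\frac{4\log_2 x}{\log_3 x}(1+o(1))$, and invoke de Bruijn's smooth-number estimate. Your bookkeeping is in fact sharper than what the paper records: by tracking the sign of the correction term $4(\log 4-1)\log_2 x/\log_3 x$ in $\log(1/\rho(u))$ you verify $\rho(u)\le(\log x)^{-4}$, which is precisely what absorbs the extra factor $\log_3 x/\log_2 x$ coming from $y=c_2 x\log x\,\log_3 x/\log_2 x$, whereas the paper's formula $e^{-u\log u+O(u\log\log(u+2))}=\frac{1}{\log^{4+o(1)}x}$, with an uncontrolled constant and sign in the exponent, leaves this final step implicit.
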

\begin{proof}
To estimate $\# R$, let 
$$u:=\frac{\log y}{\log z}\:.$$
So from (2.2), (2.3) one has 
$$u=4\:\frac{\log_2 x}{\log_3 x}(1+o(1)).$$
By standard estimates for smooth numbers (e.g. de Bruijn's theorem \cite{Bruijn} and (2.2))
we have:
$$\#R\ll ye^{-u\log u+O(u\log\log(u+2))}=\frac{y}{\log^{4+o(1)} x}=O\left(\frac{x}{\log^3 x}\right)\:.$$
\end{proof}
\begin{lemma}\label{lem23}
Let $N$ be sufficiently large and suppose that $x, y, z$ are given by (2.1), (2.2) and
(2.3). Then there are vectors $\vec{a}=(a_s \bmod\: s)_{s\in S}$ and 
$\vec{b}=(b_p \bmod\: p)_{p\in \mathcal{P}}$, such that 
\[
\#(Q\cap S(\vec{a})\cap S(\vec{b})\ll\frac{x}{\log x}\:.\tag{2.5}
\]
\end{lemma}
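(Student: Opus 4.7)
The plan is to construct $\vec{a}$ and $\vec{b}$ in two stages: a random choice of $\vec{a}$ reduces $Q$ by a factor coming from a Mertens-type product, and then a more delicate combinatorial covering argument with $\vec{b}$ extracts the remaining reduction.

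First, choose each $a_s$ independently and uniformly at random from $\mathbb{Z}/s\mathbb{Z}$. For any prime $n\in Q$, since $s\le z<x<n$ one has $s\nmid n$, so the events $\{n\equiv a_s\pmod{s}\}_{s\in S}$ are mutually independent, each of probability $1/s$. Linearity of expectation then gives
$$\mathbb{E}\bigl[\#(Q\cap S(\vec{a}))\bigr]=|Q|\prod_{s\in S}\!\left(1-\tfrac{1}{s}\right).$$
By Mertens' theorem together with (2.3) and (2.4),
$$\prod_{s\in S}\!\left(1-\tfrac{1}{s}\right)\sim \frac{\log\log^{20}x}{\log z}\asymp\frac{\log_2^2 x}{\log x\,\log_3 x},$$
while the prime number theorem gives $|Q|\asymp c_2 x\log_3 x/\log_2 x$. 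Combining these two estimates, the expectation is $\asymp x\log_2 x/\log x$, and so by Markov's inequality a specific $\vec{a}$ can be fixed for which $\#(Q\cap S(\vec{a}))\ll x\log_2 x/\log x$ holds deterministically.

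Set $T_0:=Q\cap S(\vec{a})$. For the second stage one seeks $\vec{b}$ with $\#(T_0\cap S(\vec{b}))\ll x/\log x$, i.e., residues $b_p\pmod{p}$ for $p\in\mathcal{P}$ whose associated arithmetic progressions collectively cover all but $O(x/\log x)$ elements of $T_0$. A uniformly random choice of $\vec{b}$ is not enough here: each individual $b_p$ hits a given element of $T_0$ with probability only $1/p\asymp 1/x$, and the Mertens product $\prod_{p\in\mathcal{P}}(1-1/p)\sim \log(x/2)/\log x=1-O(1/\log x)$ produces only a negligible further reduction. Instead, following the approach of \cite{ford2}, one formulates Step 2 as a hypergraph covering problem: view each pair $(p,b)$ as a hyperedge $e_{p,b}:=T_0\cap (b+p\mathbb{Z})$ on the vertex set $T_0$, and select, for each $p\in\mathcal{P}$, one residue $b_p$ such that the selected edges $\{e_{p,b_p}\}_{p\in\mathcal{P}}$ cover all but $O(x/\log x)$ vertices. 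The quasi-random behaviour of the hypergraph needed to apply a Pippenger--Spencer-type covering result---chiefly, that intersections $e_{p,b}\cap e_{p',b'}$ for $p\neq p'$ have approximately the expected size---is supplied, under GRH, by the strong equidistribution of primes in arithmetic progressions to moduli up to $y$.

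The main obstacle is Step 2. After the random Mertens-style argument in Step 1, the gain available from purely probabilistic sifting is exhausted, and the remaining factor $\log_2 x$ reduction must come from a genuinely combinatorial matching: one must show that residues $b_p$ can be selected \emph{simultaneously} across $p\in\mathcal{P}$ so that their edges cover almost all of $T_0$. This is technically the most delicate part of the argument, and it relies essentially on the hypergraph covering machinery developed in \cite{ford2} (built on Pippenger--Spencer-type random matching theorems), transferred to the present sifting setup via the GRH-based distribution of primes in progressions.
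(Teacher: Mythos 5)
The paper's proof of this lemma is a one-line citation: the setup (the sets $S$, $\mathcal{P}$, $Q$ and the sifted sets $S(\vec a)$, $S(\vec b)$, with the same $x$, $y$, $z$) is taken verbatim from Ford--Green--Konyagin--Maynard--Tao \cite{ford2}, and the statement is exactly Theorem~2 of that paper, so it is invoked directly. Your proposal instead tries to unpack the proof of that theorem. The Step~1 Mertens-product computation is correct and matches the first stage of the argument in \cite{ford2} (a random $\vec a$ brings $\#(Q\cap S(\vec a))$ down to $\asymp x\log_2 x/\log x$), and you are right that the remaining $\log_2 x$ factor cannot come from a random $\vec b$. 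But Step~2 is not a proof; it is a pointer to the hypergraph covering machinery of \cite{ford2}, which at that point is no different in substance from citing their Theorem~2 outright, only less precise.

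There is also a concrete error in your Step~2 description: the covering argument in \cite{ford2} is \emph{unconditional}. The quasi-random structure needed for the Pippenger--Spencer-type random covering lemma does not come from GRH or from equidistribution of primes in progressions to moduli up to $y$; the residues $b_p$ are constructed (not randomly sampled) using Maynard-style multidimensional sieve weights so that each class $b_p\ (\bmod\ p)$ captures an unusually large number of elements of $Q$, and the quasi-randomness estimates are established by sieve computations. GRH plays no role in Lemma~\ref{lem23}; in this paper it enters only in Section~3, in the circle-method treatment of major and minor arcs (Lemmas~\ref{lem25}, \ref{lem35}, \ref{lem36}). Attributing the covering step to GRH both misdescribes \cite{ford2} and would, if taken at face value, make the present paper's use of \cite{ford2} appear circular or redundant; the clean statement is simply that Theorem~2 of \cite{ford2} gives the lemma unconditionally.
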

\begin{proof}
This follows immediately from Theorem 2 of \cite{ford2}.
\end{proof}
We now fix $\vec{a}$ and $\vec{b}$ satisfying (2.5). We need some results from standard sieves (Brun's or Selberg's sieve). We borrow the following notations and 
results from \cite{Halberstam}. An exception is the set of primes, denoted by $\mathcal{P}$
in \cite{Halberstam}, which we denote by $\tilde{\mathcal{P}}$.
\begin{definition}\label{def23}
Let $\mathcal{A}$ be a finite set of integers and let $\tilde{\mathcal{P}}$ be a set of primes. For a positive squarefree integer $d$ composed only of primes of
$\tilde{\mathcal{P}}$ let
$$\mathcal{A}_d:=\{ n\in\mathcal{A}\::\: n\equiv 0 (\bmod\: d) \}\:.$$ 
\end{definition} 
Let $z$ be a positive real number and let $P(z)$ be a product of the primes in 
$\tilde{\mathcal{P}}$ that are smaller than $z$. Then we set
\[
S(\mathcal{A}; \tilde{\mathcal{P}}, z):=\left|\left\{a\::\: a\in\mathcal{A},\: (a, P(z))=1  \right\} \right|\:.\tag{2.6}
\]
Let $\omega$ be a multiplicative function defined for squarefree numbers with $\omega(p)=0$ for 
$p\not\in \tilde{\mathcal{P}}$. With $X$ an appropriate constant we set:
\[
R_d:= |\mathcal{A}_d|-\frac{\omega(d)}{d}\: X\:.\tag{2.7}
\]
We also define
\[
W(z):=\prod_{p< z}\left(1-\frac{\omega(p)}{p}\right)\:.\tag{2.8}
\]
We introduce the conditions:
\[
0\leq \frac{\omega(p)}{p}\leq 1-\frac{1}{A_1} \tag{$\Omega_1$}
\]
for some suitable constant $A_1\geq 1$.
\[
\sum_{w\leq p< z}\frac{\omega(p)\log p}{p}\leq \kappa\log\frac{z}{w}+A_2\:, \tag{$\Omega_2(\kappa)$}
\]
if $2\leq w\leq z$, where $\kappa\:(>0)$ and $A_2\:(\geq 1)$ are independent of $z$ and
$w$.
\[
|R_d|\leq L\left( \frac{X\log X}{d}+1\right)A_0'^{\nu(d)}\:,\ \ \text{for $\mu(d)\neq 0$, $(d, \overline{\mathcal{P}})=1$,}\tag{$R_0$}
\]
where $L$ and $A_0'$ are suitable constants, 
$$\overline{\mathcal{P}}=\{p\in P\::\: p\not\in \mathcal{P}\}\:.$$
\text{Let $\alpha$ be a constant with $0<\alpha\leq 1$}.\\ 
Assume that for any constant $u\geq 1$ there exists a constant $C_0>0$ such that
\[
\sum_{\substack{d<X^\alpha\log^{-C_0}X\\ (d, \overline{\mathcal{P}})=1}} \mu^2(d)|R_d|=O_u\left(\frac{X}{\log^{\kappa+u}X}\right)\:.\tag{$R_1(\kappa, \alpha)$}
\]
\begin{lemma}\label{lem24}
Let $\mathcal{A}$ satisfy the conditions $(\Omega_1)$,  $(\Omega_2(\kappa))$, $(R_0)$,
$(R_1(\kappa,\alpha))$. Let  $X\geq z$ and write 
$$u=\frac{\log X}{\log z}.$$
Then 
\begin{align*}
S(\mathcal{A}; \tilde{\mathcal{P}}, z)=&XW(z)\{1+O(\exp\{-\alpha u(\log u-\log\log 3u-\log(x/\alpha)-2) \})\tag{2.9}\\
&\ \ \ \ +O_u(L\log^{-u}X)\:,
\end{align*}
where the $O$-constants may depend on $u$ as well as on the constants $A_1'$,
$A_1$, $A_2$, $\kappa$ and $\alpha$.
\end{lemma}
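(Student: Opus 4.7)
This is essentially the Fundamental Lemma of combinatorial sieve theory, so my primary plan is to recognize $(\Omega_1)$, $(\Omega_2(\kappa))$, $(R_0)$, and $(R_1(\kappa,\alpha))$ as the standard hypotheses under which Jurkat and Richert proved their combinatorial sieve, and then quote Theorem~2.5 (together with the refinement in Chapter~6) of Halberstam--Richert, \emph{Sieve Methods}. No novelty is required here; the lemma is being imported as a black box for use in the sieve arguments of the next section.

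If instead I were to reconstruct the proof, the route would be as follows. Apply a combinatorial sieve of Brun or Rosser--Iwaniec type, truncated at level $D := X^{\alpha}\log^{-C_0}X$, writing
$$S(\mathcal{A};\tilde{\mathcal{P}},z) \;=\; \sum_{\substack{d\mid P(z)\\ d\le D}} \lambda_d\,|\mathcal{A}_d| \;+\; (\text{truncation term}),$$
and use $(2.7)$ to split this into a main term $X\sum_d \lambda_d\,\omega(d)/d$ and a remainder $\sum_d \lambda_d R_d$. For the main term, the density condition $(\Omega_2(\kappa))$ together with Mertens-type estimates drives the partial product toward $W(z)$; the gap between the truncated sum and $W(z)$ is exactly the source of the exponential factor $\exp(-\alpha u(\log u-\log\log 3u-\log(\kappa/\alpha)-2))$, obtained by the standard Brun-style tail bound on the sieve weights with $u=\log X/\log z$. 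For the remainder, $(R_0)$ controls each $|R_d|$ individually while $(R_1(\kappa,\alpha))$ yields $\sum_{d\le D}\mu^2(d)|R_d|=O_u(X/\log^{\kappa+u}X)$; after normalizing by $W(z)^{-1}\asymp \log^{\kappa} X$, this becomes the claimed $O_u(L\log^{-u}X)$ contribution.

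The main obstacle in any reconstruction is not the remainder but the sharp tail bound on the truncated sieve weights giving the correct dependence on $u$ in the exponent. This requires Buchstab iteration and careful combinatorial control of the weights $\lambda_d$, and is precisely the content developed across Chapters~2--6 of Halberstam--Richert. For that reason, the cleanest and most efficient path is simply to cite their Fundamental Lemma and verify hypothesis-by-hypothesis that the present set-up matches the hypotheses of their statement.
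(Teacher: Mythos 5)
Your primary plan — to recognize this as the Fundamental Lemma of combinatorial sieve theory and cite it from Halberstam--Richert, \emph{Sieve Methods} — is exactly what the paper does (the paper cites Theorem~2.5$'$ there). Your supplementary sketch of how the proof would go is reasonable background, but the paper, like you, imports the result as a black box.
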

\begin{proof}
This is Theorem 2.5$'$ of \cite{Halberstam}.
\end{proof}
\begin{definition}\label{def24}
Let 
$$Q^*=\left\{q\in Q\::\: \frac{y}{3}<q\leq \frac{2y}{3},\ p\mid N-2q \Rightarrow p>\log^{20}x\right\}\:.$$
\end{definition}
\begin{lemma}\label{lem25}
$$\#Q^*\geq c_3\:\frac{y}{\log x\: \log_2 x}\:.$$
\end{lemma}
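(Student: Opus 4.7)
The plan is to interpret $\#Q^*$ as a classical sifting quantity and then invoke Lemma \ref{lem24}. First, I would set
$$\mathcal{A} := \{N - 2q \,:\, q \text{ prime},\ y/3 < q \leq 2y/3\}, \qquad z_0 := \log^{20} x,$$
and take $\tilde{\mathcal{P}}$ to be the set of all primes. Because $y/3 > x$ for $N$ sufficiently large (in view of (2.1)--(2.2)), every prime $q \in (y/3, 2y/3]$ automatically lies in $Q$, so the definition of $Q^*$ gives $\#Q^* = S(\mathcal{A}; \tilde{\mathcal{P}}, z_0)$.

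For the density function I would take $X := \pi(2y/3) - \pi(y/3) \sim y/(3\log y)$ from the prime number theorem. Since $N$ is odd, every element of $\mathcal{A}$ is odd, forcing $\omega(2)=0$. For an odd prime $p$ dividing $N$, the congruence $N - 2q \equiv 0 \pmod{p}$ forces $q=p$, which is not in $(y/3, 2y/3]$ for large $N$, so again $\omega(p) = 0$. For any other odd prime $p$, the congruence $2q \equiv N \pmod{p}$ selects a single residue class modulo $p$, and the prime number theorem in arithmetic progressions yields
$$|\mathcal{A}_p| = \frac{X}{p-1} + R_p, \qquad \omega(p) = \frac{p}{p-1}.$$
This $\omega$ satisfies $(\Omega_1)$ (with $A_1 = 2$) and $(\Omega_2(1))$ by Mertens' theorem. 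For the remainder condition, I would invoke the Bombieri--Vinogradov theorem (or \emph{a fortiori} the GRH that is already assumed), which furnishes level of distribution $\alpha = 1/2$; since $z_0$ is only polylogarithmic in $N$, the squarefree $d$ appearing in $(R_1)$ have size at most $P(z_0)$, which is dwarfed by the Bombieri--Vinogradov ceiling.

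With these ingredients in place, Lemma \ref{lem24} applies with $u = \log X / \log z_0 \sim \log x / (20 \log_2 x) \to \infty$, so both error terms in (2.9) are $o(1)$ and
$$S(\mathcal{A}; \tilde{\mathcal{P}}, z_0) = X\, W(z_0)\, (1 + o(1)).$$
A standard Mertens estimate gives
$$W(z_0) = \prod_{\substack{2 < p < z_0 \\ p \nmid N}} \frac{p-2}{p-1} \gg \frac{1}{\log z_0} = \frac{1}{20 \log_2 x},$$
since the omitted factors (those with $p \mid N$) only enlarge the product, each satisfying $(p-1)/(p-2) \geq 1$. Combining this with $\log y \sim \log x$ yields
$$\#Q^* \gg X \cdot W(z_0) \gg \frac{y}{\log x \, \log_2 x},$$
which is the desired bound for a suitable $c_3 > 0$. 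The principal technical obstacle is the verification of the remainder condition $(R_1(1, 1/2))$: this is the sole step requiring a nontrivial arithmetic input (Bombieri--Vinogradov, or GRH), whereas everything else reduces to Mertens-type computations.
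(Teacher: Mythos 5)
Your argument is essentially the same as the paper's: both apply Lemma~\ref{lem24} to $\mathcal{A}=\{N-2q : y/3<q\leq 2y/3\}$ with sifting level $z_0=\log^{20}x$, verify $(\Omega_1)$ and $(\Omega_2(1))$ via Mertens, control the remainders using a level-of-distribution input (the paper invokes GRH directly; you note Bombieri--Vinogradov would also suffice), and conclude from $W(z_0)\asymp 1/\log z_0\asymp 1/\log_2 x$. The one place you are more careful than the paper is in the definition of the sieve density: the paper declares $\omega(p)=p/(p-1)$ for all odd $p$ with $X=\mathrm{li}(2y/3)-\mathrm{li}(y/3)$, whereas you set $\omega(p)=0$ for odd $p\mid N$, observing that for such $p<z_0$ the congruence $N-2q\equiv 0\ (\mathrm{mod}\ p)$ would force $q=p\notin(y/3,2y/3]$, so $\mathcal{A}_p=\emptyset$. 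This refinement is in fact needed to make the remainder hypothesis hold as stated: with the paper's choice, $R_p=-X/(p-1)$ for small odd $p\mid N$, which is not $O(X^{1/2}\log^3 X)$ and would in principle spoil $(R_1)$. (The paper presumably intends to implicitly exclude primes dividing $N$ from $\tilde{\mathcal{P}}$; you make that adjustment explicit.) Everything else coincides.
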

\begin{proof}
We apply Lemma \ref{lem24} with
$$\mathcal{A}=\left\{ N-2q\::\: \frac{y}{3}<q\leq \frac{2y}{3} \right\},\ z=(\log x)^{20}\:,$$
$$X=li\:\frac{2y}{3}-li\:\frac{y}{3},\ 
\omega(p)=\left\{ 
  \begin{array}{l l}
    \frac{p}{p-1}\:, & \quad \text{for $p>2$}\vspace{2mm}\\ 
      0\:, & \quad \text{for $p=2$}
  \end{array} \right.
$$
Then the conditions $(\Omega_1)$, $(\Omega_2(\kappa))$ are satisfied with $A_1=2$,
$\kappa=1$. The existence of $A_2$ follows from Mertens' result
$$\sum_{p\leq x}\frac{\log p}{p}=\log x+O(1)\:.$$
By the Generalized Riemann Hypotehsis we have 
$$|R_d|\leq X^{1/2}\log^3 X\:,\ \text{for $d\leq X^{1/2}$}.$$
We have
$$W(z)=\prod_{p<(\log x)^{20}}\left(1-\frac{\omega(p)}{p}\right)=O\left(\frac{1}{\log\log x}\right)\:.$$
Lemma \ref{lem25} now follows from (2.9).
\end{proof}
\begin{definition}\label{def25}
For $w\geq 1$, $r, b \in\mathbb{N}$, let
$$\pi(w, r, b)=\#\{ p\leq w\::\: p\equiv b(\bmod\: r) \}\:.$$
\end{definition}
\begin{lemma}\label{lem26}
For $1\leq r\leq b$, $(b,r)=1$, we have:
$$\pi(w, r, b)=O\left(\frac{w}{\phi(r)\log(w/r)} \right)\:.$$
\end{lemma}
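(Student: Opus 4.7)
The plan is to prove this as a Brun--Titchmarsh type bound by applying Lemma \ref{lem24} to the arithmetic progression $\mathcal{A}=\{n\in\mathbb{N}: n\leq w,\ n\equiv b\:(\bmod\:r)\}$. I take the sifting set $\tilde{\mathcal{P}}=\{p:\ p\nmid r\}$ and the multiplicative function $\omega(p)=1$ for $p\nmid r$, $\omega(p)=0$ for $p\mid r$. With the choice $X=w/r$, the trivial count $|\mathcal{A}_d|=X/d+O(1)$ for squarefree $d$ composed of primes from $\tilde{\mathcal{P}}$ gives $R_d=O(1)$ in the notation of (2.7).

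Next I verify the hypotheses of Lemma \ref{lem24}. Condition $(\Omega_1)$ holds with $A_1=2$ since $\omega(p)/p\leq 1/2$; condition $(\Omega_2(\kappa))$ holds with $\kappa=1$ by Mertens' theorem $\sum_{p\leq z}(\log p)/p=\log z+O(1)$, the primes dividing $r$ contributing only $O(\log\log r)$ which can be absorbed into $A_2$; condition $(R_0)$ is immediate from $R_d=O(1)$; and for any fixed $\alpha\in(0,1)$, condition $(R_1(1,\alpha))$ follows from the crude bound $\sum_{d<X^{\alpha}}|R_d|\ll X^{\alpha}$, which is $O_u(X/\log^{1+u}X)$ for every fixed $u$ once $X$ is large. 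I then apply Lemma \ref{lem24} with sifting level $z:=(w/r)^{1/u_0}$ for a fixed but sufficiently large constant $u_0$, so that $u=\log X/\log z=u_0$ makes the error factor $\exp\{-\alpha u_0(\log u_0-\log\log 3u_0-\cdots)\}$ arbitrarily small. By Mertens' theorem,
\[
W(z)=\prod_{\substack{p<z\\ p\nmid r}}\Bigl(1-\tfrac{1}{p}\Bigr)=\Bigl(\prod_{p<z}\bigl(1-\tfrac{1}{p}\bigr)\Bigr)\prod_{\substack{p\mid r\\ p<z}}\Bigl(1-\tfrac{1}{p}\Bigr)^{-1}\ll\frac{1}{\log z}\cdot\frac{r}{\phi(r)},
\]
so the main term is $XW(z)\ll w/(\phi(r)\log(w/r))$.

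Finally, any prime $p\leq w$ with $p\equiv b\:(\bmod\:r)$ and $p>z$ is automatically coprime to $P(z)$, using $(b,r)=1$ to ensure $p\nmid r$. Hence
\[
\pi(w,r,b)\leq z+S(\mathcal{A};\tilde{\mathcal{P}},z)\ll\frac{w}{\phi(r)\log(w/r)},
\]
since $z=(w/r)^{1/u_0}$ is dominated by the sieve main term. The edge case in which $r$ is so close to $w$ that $\log(w/r)$ is not bounded below is trivial: the progression then contains only $O(1)$ integers in total. The real content of the Brun--Titchmarsh inequality is packaged inside Lemma \ref{lem24}, so the only obstacle is the routine verification of the sieve hypotheses above; in particular no use of GRH or of deep equidistribution is required here.
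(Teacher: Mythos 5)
Your argument is correct, but it differs from the paper in an essential way: the paper's entire ``proof'' of Lemma \ref{lem26} is a one-line citation to Theorem 3.8 of Halberstam--Richert, whereas you reconstruct a genuine proof from scratch using the fundamental-lemma-type sieve statement quoted as Lemma \ref{lem24}. Your derivation is sound: the setup $\mathcal{A}=\{n\leq w: n\equiv b\ (\bmod\ r)\}$, $\tilde{\mathcal{P}}=\{p:p\nmid r\}$, $X=w/r$ does give $R_d=O(1)$, the hypotheses $(\Omega_1)$, $(\Omega_2(1))$, $(R_0)$, $(R_1(1,\alpha))$ are all trivially satisfied for a fixed $\alpha<1$, Mertens gives $W(z)\ll\frac{r}{\phi(r)}\frac{1}{\log z}$, and the bound $\pi(w,r,b)\leq z+S(\mathcal{A};\tilde{\mathcal{P}},z)$ with $z=(w/r)^{1/u_0}$ then yields the stated $O$-estimate once the bounded-ratio edge case is disposed of. Two small remarks: you do not actually need $u_0$ to be ``sufficiently large'' for an upper bound, since for any fixed $u_0\geq 2$ the error factor $1+O(\cdot)$ in Lemma \ref{lem24} is simply bounded and $z$ is negligible compared with $XW(z)$; and what your route buys is self-containedness inside the paper's own sieve lemma, at the cost of losing the sharp constant $2$ that the Selberg-sieve proof behind Halberstam--Richert's Theorem 3.8 provides (irrelevant here since only an $O$-bound is used). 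Citing the reference, as the paper does, is the shorter and more standard move.
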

\begin{proof}
This is the Brun-Titchmarsh Inequality (see \cite{Halberstam}, Theorem 3.8).
\end{proof}
\begin{definition}\label{def26}
Let $q\in Q^*$. Then we define
$$\mathcal{N}_1(q):=\{s\in S\::\: q\equiv a_s(\bmod\: s)\}$$
$$\mathcal{N}_2(q):=\{p\in \mathcal{P}\::\: q\equiv b_p(\bmod\: p)\}$$
$$\mathcal{N}_3(q):=\{p\leq x \::\: N-2q\equiv 0(\bmod\: p)\}\:.$$
\end{definition}
\begin{lemma}\label{lem27}
Let $x\rightarrow \infty$. Then we have for all but $o(|Q^*|)$ primes $q\in Q^*$:
\[
\#\mathcal{N}_1(q)=O(\log x\:\log_2 x\:\log_3 x) \tag{2.10}
\]
\[
\#\mathcal{N}_2(q)=O((\log_2 x)^2) \tag{2.11}
\]
\[
\#\mathcal{N}_3(q)=O((\log\log x)^3) \tag{2.12}
\]
\end{lemma}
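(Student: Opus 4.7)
The plan is to prove each of (2.10), (2.11), and (2.12) separately by the first-moment method followed by Markov's inequality. In each case, interchanging the order of summation gives
\[
\sum_{q \in Q^*} \#\mathcal{N}_i(q) = \sum_{r} \#\{q \in Q^* : q \text{ satisfies the relation indexed by } r\},
\]
the inner count is bounded by a standard estimate for primes in arithmetic progressions, and division by $|Q^*| \gg y/(\log x \log_2 x)$ (Lemma \ref{lem25}) yields the average of $\#\mathcal{N}_i(q)$. Markov's inequality then controls the corresponding exceptional set.

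Bounds (2.10) and (2.11) follow directly from Brun--Titchmarsh (Lemma \ref{lem26}). For (2.10), every $s \in S$ satisfies $s \leq z = x^{o(1)}$, so $\log(y/s) \sim \log x$ and $\pi(y; s, a_s) \ll y/(s \log x)$; combined with $\sum_{s \in S} 1/s \sim \log_2 x$ (Mertens), the first moment is $O(y \log_2 x/\log x)$, the average is $O((\log_2 x)^2)$, and Markov with threshold $T_1 = \log x \log_2 x \log_3 x$ gives exceptional set of size $O(|Q^*| \log_2 x/(\log x \log_3 x)) = o(|Q^*|)$. For (2.11), each $p \in \mathcal{P} = (x/2, x]$ satisfies $\log(y/p) \sim \log_2 x$, so $\pi(y; p, b_p) \ll y/(p \log_2 x)$, and $\sum_{p \in \mathcal{P}} 1/p = O(1/\log x)$ yields first moment $O(y/(\log x \log_2 x))$, average $O(1)$, and exceptional set of size $O(|Q^*|/(\log_2 x)^2) = o(|Q^*|)$ via Markov with threshold $T_2 = (\log_2 x)^2$.

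The main obstacle is (2.12): a plain Brun--Titchmarsh bound produces only average $O(\log x \log_2 x)$, too weak against the target $(\log_2 x)^3$. To sharpen, apply the sieve of Lemma \ref{lem24} to
\[
\mathcal{A}_p = \{N - 2q : q \in (y/3, 2y/3]\text{ prime},\ q \equiv N/2 \pmod{p}\},
\]
sifting out primes $\leq \log^{20} x$, thereby exploiting the extra sieve factor $\sim 1/\log_2 x$ built into the definition of $Q^*$. Under GRH, the Bombieri--Vinogradov-type remainder bound supplies condition $(R_1(1, 1/2 - \varepsilon))$, and for $p \leq y/(\log y)^{C_0}$ the sieve input $X_p \sim y/(p \log x)$ yields
\[
\#\{q \in Q^* : p \mid N - 2q\} \ll \frac{y}{p \log x \log_2 x}.
\]
Summed over this range, one gets a contribution $O(y/\log x)$. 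For the short remaining range $p \in (y/(\log y)^{C_0}, x]$, Brun--Titchmarsh alone gives $y/(p \log_2 x)$ per $p$, and $\sum 1/p$ over this range is $O(\log_2 x/\log x)$ by Mertens, contributing a further $O(y/\log x)$. Combining, the first moment is $O(y/\log x)$, the average is $O(\log_2 x)$, and Markov with threshold $(\log_2 x)^3$ gives exceptional set $O(|Q^*|/(\log_2 x)^2) = o(|Q^*|)$. The union of the three exceptional sets still has size $o(|Q^*|)$, proving Lemma \ref{lem27}.
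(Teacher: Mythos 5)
For (2.10) and (2.11) your argument is essentially the same first-moment-plus-Markov computation the paper uses; your Mertens estimates ($\sum_{s\in S}1/s\sim\log_2 x$, respectively $\pi(y;p,b_p)\ll y/(p\log_2 x)$ for $p\in\mathcal P$) are actually sharper than the loose bounds the paper writes down, but both land in the same place.

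For (2.12) you have spotted something real. The paper bounds the inner count trivially by $O(y/p)$, which gives first moment $O(y\log_2 x)$; dividing by $|Q^*|\gg y/(\log x\,\log_2 x)$ yields average $O(\log x\,(\log_2 x)^2)$, and Markov with threshold $(\log_2 x)^3$ then fails outright, since the exceptional set one obtains has size $y/(\log_2 x)^2$, which is \emph{larger} than $|Q^*|$. You correctly recognize that even Brun--Titchmarsh alone (average $O(\log x\log_2 x)$) does not suffice and that one must use the roughness of $N-2q$ built into $Q^*$ via a sieve, which buys the needed extra $1/\log_2 x$; this is a genuinely different route from the paper's, and as the paper's proof of (2.12) stands it has a gap. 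Your strategy is the right one.

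One detail in your execution needs repair. Under GRH the error per modulus in counting primes $q\equiv N\bar 2\pmod{pd}$ is $O(\sqrt y\log^2 y)$, whereas the sieve's main term is $X_p\asymp y/(p\log y)$; for the error to be dominated one needs $p\ll\sqrt y/(\log y)^{B}$ for a suitable fixed $B$, not $p\leq y/(\log y)^{C_0}$ as you wrote (note $x>\sqrt y$ here, so the range $(\sqrt y, x]$ is non-trivial). The fix is harmless: split at $P_0=\sqrt y/(\log y)^{B}$; for $p\leq P_0$ the GRH sieve gives $\ll y/(p\log x\log_2 x)$ and hence a contribution $\ll y/\log x$; for $P_0<p\leq x$, Brun--Titchmarsh gives $\ll y/(p\log(y/p))$, and the sum $\sum_{P_0<p\leq x}1/(p\log(y/p))$ is $\ll\log_2 x/\log x$ (it is dominated by $p$ close to $x$), yielding a contribution $\ll y\log_2 x/\log x$. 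The total first moment is therefore $O(y\log_2 x/\log x)$, the average is $O((\log_2 x)^2)$, and Markov with threshold $(\log_2 x)^3$ still gives an exceptional set of proportion $O(1/\log_2 x)=o(1)$. With this adjustment your argument is complete.
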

\begin{proof}
We have by Lemma \ref{lem26}:
$$\sum_{q\in Q^*}\# \mathcal{N}_1(q)=\sum_{s\in S}\ \sum_{\substack{q\equiv a_s(\bmod\: s)\\ q\in Q^*}}1=O\left(\frac{y}{\log x}\: \sum_{s\in S}\frac{1}{s} \right)=
O\left(\frac{y}{\log x}\:\frac{\log x\:\log_3 x}{\log_2 x}\right).$$
The relation (2.10) follows from Lemma \ref{lem25}.\\
We have 
$$\sum_{q\in Q^*}\#\mathcal{N}_2(q)=\sum_{p\in \mathcal{P}}\ \sum_{\substack{q\equiv b_p(\bmod\: p)\\ q\in Q^*}}1=O\left(y\: \sum_{p\in \mathcal{P}}\frac{1}{p}\right)=O\left(\frac{y}{\log x}\right)\:.$$
The relation (2.11) now follows from Lemma \ref{lem25}. We have
$$\sum_{q\in Q^*}\#\mathcal{N}_3(q)=\sum_{(\log x)^{20}<p\leq x}\sum_{\substack{q\in Q^* \\ N-2q\equiv 0(\bmod p)}}1=O\left(y\:\sum_{(\log x)^{20}<p\leq x}\frac{1}{p}\right)=O(y\log\log x)\:.$$
The relation (2.12) now follows from Lemma \ref{lem25}.
\end{proof}
\begin{lemma}\label{lem28}
There is a prime $q_0\in \left[\frac{y}{3},\frac{2y}{3}\right]$ and a subset
$$\mathcal{P}^*\subset\{p\leq x\::\: p\ \text{prime}\}\:,$$
such that $q_0\not\equiv d_p(\bmod\: p)$ for $p\in \mathcal{P}^*$, $p\nmid N-2q_0$
for $p\in \mathcal{P}^*$.
$$\#\{n\in(x,y]\::\: n\equiv d_p(\bmod\: p)\ \text{for $p\in\mathcal{P}^*$}\}=O\left(\frac{x}{\log x}\right)\:.$$
\end{lemma}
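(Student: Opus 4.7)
\medskip

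\noindent\textbf{Proof proposal.}
The plan is to average over $q\in Q^*$: Lemma~\ref{lem27} tells us that for all but $o(|Q^*|)$ primes $q\in Q^*$, the three sets $\mathcal{N}_1(q),\mathcal{N}_2(q),\mathcal{N}_3(q)$ are all much smaller than $\log x$. Combining with Lemma~\ref{lem25}, such $q$'s exist, so we fix one of them and call it $q_0$. We then define
$$\mathcal{P}^*:=\{p\leq x\::\:p\ \text{prime}\}\setminus\bigl(\mathcal{N}_1(q_0)\cup\mathcal{N}_2(q_0)\cup\mathcal{N}_3(q_0)\bigr).$$
The two pointwise conditions on $q_0$ are then immediate: for $p\in\mathcal{P}^*$, if $p\in S$ then $d_p=a_p$ and $p\notin\mathcal{N}_1(q_0)$; if $p\in\mathcal{P}$ then $d_p=b_p$ and $p\notin\mathcal{N}_2(q_0)$; and if $p$ is a prime $\leq x$ of neither of these two types, then $d_p=0$ and $q_0\not\equiv 0(\bmod\: p)$ since $q_0$ is a prime larger than $x$. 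Similarly $p\nmid N-2q_0$ either because $p\in\mathcal{P}^*$ is larger than $\log^{20}x$ and $p\notin\mathcal{N}_3(q_0)$, or because $p\leq \log^{20}x$ and the defining property of $Q^*$ (Definition~\ref{def24}) forbids small prime factors of $N-2q_0$.

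The content of the lemma is then the cardinality bound on the set sifted by $\mathcal{P}^*$ (which I read as $n\not\equiv d_p(\bmod\: p)$ for all $p\in\mathcal{P}^*$). I would bound
$$\#\bigl\{n\in(x,y]\::\: n\not\equiv d_p(\bmod\: p)\ \forall p\in\mathcal{P}^*\bigr\}\leq \#\bigl\{n\in(x,y]\::\: n\not\equiv d_p(\bmod\: p)\ \forall p\leq x\bigr\}+\sum_{p\in T}\left(\frac{y}{p}+1\right),$$
where $T:=\mathcal{N}_1(q_0)\cup\mathcal{N}_2(q_0)\cup\mathcal{N}_3(q_0)$: any surviving $n$ either survived the full sieve, or else lies in some residue class $d_p(\bmod\: p)$ with $p\in T$. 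The first term is $O(x/\log x)$ by Lemmas~\ref{lem21}, \ref{lem22} and \ref{lem23}.

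For the second term I would use the size estimates from Lemma~\ref{lem27} together with the lower bounds on the primes in each $\mathcal{N}_i$: primes in $\mathcal{N}_1(q_0)\subset S$ exceed $(\log x)^{20}$, primes in $\mathcal{N}_2(q_0)\subset\mathcal{P}$ exceed $x/2$, and primes in $\mathcal{N}_3(q_0)$ exceed $(\log x)^{20}$. Plugging in the definitions of $x,y$ from (2.1)--(2.2), each of the three contributions turns out to be $o(x/\log x)$, since the numerators grow only polylogarithmically in $x$ while the denominators are either genuine powers of $\log x$ or (for $\mathcal{N}_2$) essentially $x$.

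The only nontrivial step is the last one: choosing $q_0$ whose sieve-failure sets are not only small in number but also sit above the thresholds that make the tail sum $\sum_{p\in T}y/p$ negligible compared with $x/\log x$. Everything else is a direct assembly of the preceding lemmas.
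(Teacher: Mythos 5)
Your proof is correct and takes essentially the same route as the paper: fix a good $q_0$ via Lemmas~\ref{lem25} and~\ref{lem27}, drop $\mathcal{N}_1(q_0)\cup\mathcal{N}_2(q_0)\cup\mathcal{N}_3(q_0)$ from the sieving primes, and bound the survivors by the full-sieve count from Lemmas~\ref{lem21}--\ref{lem23} plus the reintroduced arithmetic progressions, which are negligible by the size estimates of Lemma~\ref{lem27} together with the lower bounds $p>(\log x)^{20}$ (for $\mathcal{N}_1,\mathcal{N}_3$) and $p>x/2$ (for $\mathcal{N}_2$). (Note: the paper's display (2.16) writes $p\leq z$ where the lemma statement and the subsequent lower bound for $u_0$ both require $p\leq x$, so your reading is the intended one, as is your interpretation of the displayed condition as $n\not\equiv d_p\ (\bmod\ p)$ for all $p\in\mathcal{P}^*$.)
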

\begin{proof}
Since the sets $\{n\equiv d_p(\bmod\: p)\}$ are arithmetic progressions with difference $p$, we have by Lemma \ref{lem27}:
\begin{align*}
\#\{n\in(x,y]\::\: n\equiv a_s(\bmod\: s)\ \text{for some $s\in \mathcal{N}_1(q_0)$}\}
&=O(x\log x \log_2 x\log_3 x(\log x)^{-20})\\
&=O(x(\log x)^{-19})\:,\tag{2.13}
\end{align*}
\begin{align*}
\#\{n\in(x,y]\::\: n\equiv b_p(\bmod\: p)\ \text{for some $s\in \mathcal{N}_2(q_0)$}\}
&=O(x\log x (\log_2 x)^2x^{-1})\\
&=O(x(\log x)^{-19})\tag{2.14}
\end{align*}
and 
\[
\#\{n\in(x,y]\::\: n\equiv d_p(\bmod\: p)\ \text{for some $p\in \mathcal{N}_3(q_0)$}\}
=O(x(\log x)^{-19})\:.\tag{2.15}
\]
We now define 
\[
\mathcal{P}^*:=\{p\ \text{prime},\ p\leq z\}\setminus \{\mathcal{N}_1(q_0)\cup \mathcal{N}_2(q_0) \cup \mathcal{N}_3(q_0)  \}\:.\tag{2.16}
\]
The claim of Lemma \ref{lem28} follows from Lemmas \ref{lem21}, \ref{lem22},
\ref{lem23} and the estimates (2.13), (2.14) and (2.15). 
\end{proof}
We now conclude the proof of Theorem \ref{thm2}.\\
Let $n_1, \ldots, n_k\in (x, y]\setminus \{q_0\}$ with $n_j\not\equiv d_p(\bmod\: p)$
for all $p\in \mathcal{P}^*.$ By the prime number theorem there is a $C_0>1$, such
that 
$$\pi(C_0x)-\pi(x)\geq 2k\:.$$ 
Since $p\mid (n_j-q_0)$ for at most one prime
$p\in (x, C_0x]$ we may choose primes $\tilde{p}_1, \ldots, \tilde{p}_k\in(x, C_0x]$,
such that $n_j\not\equiv q_0 (\bmod\: p)$ for $1\leq j\leq k$.
\begin{definition}\label{def 27}
We define 
$$d\tilde{p}_j=n_j,\ \text{for $1\leq j\leq k$.}$$
\end{definition}
\noindent We now set 
\[
P^*=\prod_{p\in\mathcal{P}^*}p\prod_{j=1}^k \tilde{p}_j \tag{2.17}
\]
and determine $v_0$ by the conditions
\begin{align*}
&1\leq v_0< P^*\\
&v_0\equiv -d_p(\bmod\: p)\ \text{for all $p\in P^*$}\:.\tag{2.18}
\end{align*}
By the Chinese Remainder Theorem $v_0$ is uniquely determined. We now 
define the isolated residue class $(\bmod\: P^*)$ by 
\[
u_0=v_0+q_0\:.\tag{2.19}
\]
From (2.18) we see that $p\mid v_0+d_p$ for all $p\in \mathcal{P}^*\setminus \{q_0\}$. Thus, we have 
$(n, P^*)>1$ for all $n\in (x, y]\setminus \{u_0\}$. Thus $|u_0-m|$ is composite
for all $m$ with $0<|m|\leq y/3$.  By Definition \ref{def26} for $\mathcal{N}_3(q)$
and (2.16) we have:
$$(u_0, P^*)=(N-2u_0, P^*)=1\:.$$
We have 
\[
P^*\leq P(C_0x)=\exp(C_0x(1+o(1)))\tag{2.20}
\]
by the prime number theorem.\\
By the Definition \ref{def22} we have $d_p=0$ for $z<p\leq x/2$. Therefore
$$u_0\geq \prod_{\substack{z< p\leq x/2\\ p\not\in \mathcal{N}_1(q_0)\cup  \mathcal{N}_2(q_0) \cup  \mathcal{N}_3(q_0)}} p\geq (P^*)^c$$
if $c$ is chosen small enough.\\
The bound $N\geq (P^*)^{100}$ follows, if $c_1$ in (2.1) is chosen sufficiently small.\\
The upper bound $N\leq (P^*)^D$ follows from Lemma \ref{lem27}. This concludes the proof of 
Theorem \ref{thm2}.
\section{The Circle Method}
Let $N$ and $P^*$ satisfy the conditions of Theorem \ref{thm2} and let $u_0$ be
the isolated residue-class $\bmod\: P^*$. It remains to be shown that there are primes $p_1, p_2, p_3$ with $p_1\equiv p_2\equiv u_0(\bmod\: P^*)$, 
$p_3\equiv N-2u_0(\bmod\: P^*)$ with 
$$p_1+p_2+p_3=N\:.$$
This will be achieved by the circle method.\\
We closely follow \cite{liu}. The results and definitions are borrowed from there with slight modifications. They are formulated for general arithmetic progressions.
\begin{definition}\label{def31}
Let $R\leq N^{1/20}$, $P=R^3L^{3C}$, $Q=NR^{-3}L^{-4C}$, $L=\log N$, with the
constant $C$ to be specified later. The major arc of the circle method is defined as
\[
E_1(R):=\bigcup_{q\leq P}\bigcup_{\substack{a=1\\ (a,q)=1}}^q\left[\frac{a}{q}-\frac{1}{qQ},\: \frac{a}{q}+\frac{1}{qQ} \right]\tag{3.2}
\]
\[
E_2(R):=\left[\frac{1}{Q},1+\frac{1}{Q} \right]-E_1(R)\:.\tag{3.3}
\]
Since $2P<Q$, no two major arcs intersect. Write $\alpha\in [0,1]$ in the form 
\[
\alpha=\frac{a}{q}+\lambda,\ 1\leq a\leq q,\ (a,q)=1\:.\tag{3.4}
\]
It follows from Dirichlet's lemma on rational approximation that
$$E_2(R)=\left\{ \alpha\::\: P<q<Q,\ |\lambda|\leq \frac{1}{qQ} \right\}\:.$$
Let $\Lambda(n)$ be the von Mangoldt function, $e(\alpha)=e^{2\pi i \alpha}$ and
\[
S(\alpha, r, b)=\sum_{\substack{n\leq N \\ n\equiv b(\bmod\: r)}}\Lambda(n)e(n\alpha)\:.\tag{3.5}
\]
\end{definition}
\noindent By orthogonality we then have 
\[
\sum_{\substack{(n_1, n_2, n_3)\in\mathbb{N}^3,\\ n_i\equiv b_i(\bmod r)\\ n_1+n_2+n_3=N}} \Lambda(n_1)\Lambda(n_2)\Lambda(n_3)=\int_0^1 S(\alpha, r, b_1)S(\alpha, r, b_2)S(\alpha, r, b_3)e(-N\alpha)d\alpha\:.\tag{3.6}
\]
\begin{lemma}\label{lem31}
Let $A>0$ be arbitrary and $\alpha\in E_2(R)$. If $C$ is sufficiently large, then 
\[
S(\alpha, r, b) \ll\frac{N}{r\log^A N}\tag{3.7}
\]
uniformly for $\frac{1}{2}R\leq r\leq 2R$.
\end{lemma}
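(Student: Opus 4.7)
The plan is to adapt the standard Vinogradov minor arc argument to arithmetic progressions, following the approach of Liu \cite{liu} that the authors cite. The starting point is to split $\Lambda$ via Dirichlet characters modulo $r$:
\[
S(\alpha,r,b)=\frac{1}{\phi(r)}\sum_{\chi\bmod r}\bar{\chi}(b)\,T(\alpha,\chi),\qquad T(\alpha,\chi)=\sum_{n\leq N}\chi(n)\Lambda(n)e(n\alpha).
\]
Since $r\leq 2R\leq 2N^{1/20}$, there are only $\phi(r)\leq r$ characters, so it suffices to prove $T(\alpha,\chi)\ll N/\log^{A+1}N$ uniformly in $\chi$ and in $\alpha\in E_2(R)$, and then the factor $1/\phi(r)\leq 2/r$ out front produces the claimed $N/(r\log^A N)$.

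To bound $T(\alpha,\chi)$, I would apply Vaughan's identity with parameters $U=V=N^{1/3}$ to decompose it as a sum of type I sums $\sum_{m\leq U}a_m\sum_{k\leq N/m}\chi(mk)e(mk\alpha)$ and type II sums $\sum_{U<m\leq N/V}\sum_{V<k\leq N/m}a_mb_k\chi(mk)e(mk\alpha)$, with $|a_m|,|b_k|\ll d(n)\log N$. For the type I sums, partial summation together with the standard estimate
\[
\sum_{k\leq K}\chi(k)e(k\beta)\ll \min\!\bigl(K,\;\|q\beta\|^{-1}\bigr)\cdot(\text{character factor})
\]
reduces the problem to bounding $\sum_{m}\min(N/m,\|m\alpha\|^{-1})$; by Vinogradov's well-known lemma on such sums, writing $\alpha=a/q+\lambda$ with $P<q\leq Q$ and $|\lambda|\leq 1/(qQ)$, this is $\ll (N/q+N^{5/6}+N^{1/2}q^{1/2})\log^{O(1)}N$. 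With $P=R^3L^{3C}\geq L^{3C}$, taking $C$ large enough forces $N/q\ll N/\log^{A+1}N$, and the other terms are automatically smaller since $q\leq Q=NR^{-3}L^{-4C}$ gives $q^{1/2}N^{1/2}\ll N/L^{2C}$.

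For the type II sums, I would apply Cauchy--Schwarz in the outer variable, square out, and reduce to bounding
\[
\sum_{V<k_1,k_2\leq N/V}|b_{k_1}b_{k_2}|\Bigl|\sum_{U<m\leq M(k_1,k_2)}e\bigl(m(k_1-k_2)\alpha\bigr)\Bigr|,
\]
where the character $\chi$ disappears in modulus since $|\chi(m)|\leq 1$. The inner sum is again $\ll\min(N/m,\|(k_1-k_2)\alpha\|^{-1})$, and Vinogradov's lemma on the range $P<q\leq Q$ yields a bound of the same quality as the type I case after a routine calculation.

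The main obstacle is tracking the $r$-uniformity cleanly: the character twist $\chi(mk)$ must not destroy the cancellation, and one has to verify that the bounds on the type II sums are independent of $\chi$ (this works because the Cauchy--Schwarz step kills $|\chi|$) and that summing over the $\phi(r)$ characters only costs a factor $\phi(r)$, exactly matching the $1/\phi(r)$ in front. A minor secondary issue is the principal character's contribution to $T(\alpha,\chi_0)$, which differs from $T(\alpha,\chi_0^*)$ (induced from the trivial character) only by $\sum_{p\mid r}\log p\cdot e(p^k\alpha)$-type terms bounded by $\log^2 N$, which is absorbed easily. The GRH assumption is not needed here; it is used only in Section~2 for the Brun--Titchmarsh-type remainder in Lemma~\ref{lem25}.
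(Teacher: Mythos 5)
Your outline has a fatal bookkeeping error at the very first step: the Dirichlet-character decomposition does \emph{not} produce the factor $1/r$ you need. You write
$S(\alpha,r,b)=\frac{1}{\phi(r)}\sum_{\chi\bmod r}\bar\chi(b)T(\alpha,\chi)$ and claim it suffices to show
$T(\alpha,\chi)\ll N/\log^{A+1}N$, ``and then the factor $1/\phi(r)$ out front produces the claimed $N/(r\log^A N)$.''
But the sum runs over $\phi(r)$ characters, so the triangle inequality gives only
$|S(\alpha,r,b)|\leq\frac{1}{\phi(r)}\sum_{\chi}|T(\alpha,\chi)|\leq\max_{\chi}|T(\alpha,\chi)|$: the prefactor $1/\phi(r)$ is exactly cancelled by the number of terms in the sum. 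Your scheme would yield $S(\alpha,r,b)\ll N/\log^{A+1}N$, which is weaker than the statement by a full factor of $r\asymp R$, and that factor is precisely what the paper must win — compare the trivial bound $S(\alpha,r,b)\ll N/\phi(r)$ coming from Brun--Titchmarsh, which already has the $1/r$ built in, and against which the lemma only wins logarithms. Nor can you push the burden onto the individual $T(\alpha,\chi)$: for the principal character (and indeed, unconditionally, for any single character), $T(\alpha,\chi)\ll N/\log^{A}N$ on minor arcs is essentially the ceiling of what Vaughan-type arguments give; a bound like $N/(r\log^A N)$ per character is false. There is also a secondary imprecision: for a nonprincipal $\chi\bmod r$, the linear sum $\sum_{k\leq K}\chi(k)e(k\beta)$ is \emph{not} controlled by $\min(K,\|q\beta\|^{-1})$ with a harmless ``character factor''; you would need Gauss-sum/P\'olya--Vinogradov machinery, and this destroys the clean Vinogradov lemma you are invoking.

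The paper avoids the whole issue by citing the Balog--Perelli estimate
\[
\sum_{\substack{n\leq M\\ n\equiv b\,(\mathrm{mod}\,r)}}\Lambda(n)\,e\!\left(\frac{a}{q}n\right)
\ll L^3\left(\frac{hN}{rq^{1/2}}+\frac{q^{1/2}N^{1/2}}{h^{1/2}}+\frac{N^{4/5}}{r^{2/5}}\right),\qquad h=(r,q),
\]
whose proof applies Vaughan's identity \emph{directly} to the sum restricted to $n\equiv b\pmod r$, so the $1/r$ (or $r^{-2/5}$) density saving is present from the start and is not squandered by an a priori character split. With $\alpha=a/q+\lambda$, $P<q\leq Q$, $|\lambda|\leq 1/(qQ)$, $P=R^3L^{3C}$, $Q=NR^{-3}L^{-4C}$, $r\asymp R\leq N^{1/20}$, each of the three terms (after partial summation in $\lambda$) is easily seen to be $\ll N/(rL^A)$ once $C$ is taken large in terms of $A$. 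If you want to reconstruct this without the citation, you must run Vaughan's identity on the congruence-restricted sum itself (or, equivalently, bring in a large-sieve/mean-value estimate over characters that recovers the average saving), not on the individual character sums. As you correctly note, no GRH is needed in this lemma.
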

\begin{proof}
This follows from the result of Balog and Perelli \cite{balog2} which we present below:\\
For $M\leq N$ and $h=(r, q)$ it holds
$$\sum_{\substack{n\leq M \\ n\equiv b(\bmod r)}}\Lambda(n)e\left(\frac{a}{q}\: n\right)\ll L^3\left( \frac{hN}{rq^{1/2}}+\frac{q^{1/2}N^{1/2}}{h^{1/2}}+\frac{N^{4/5}}{r^{2/5}}\right)\:.$$
\end{proof}
\begin{definition}\label{def32}(Definitions from \cite{liu})\\
Let $d, f, g, k, m$ be fixed positive integers and $\chi_g$ a Dirichlet character $\bmod g$. Let 
\[
G(d, f, m, \chi_g, k)=\sum_{\substack{n=1 \\ (n,k)=1\\ n\equiv f(\bmod\: d)}}^k\chi_g(n)e\left( \frac{mn}{k}\right)\tag{3.8}
\]
\textit{Remark.} This is a generalization of the Gaussian sum
$$G(m,\chi)=\sum_{n=1}^k\chi(n)e\left( \frac{mn}{k}\right)\:.$$
In the special case $g=k$, let
\[
G(d, f, m, \chi)=G(d, f, m, \chi_k, k)\:.\tag{3.9}
\]
For positive integers $r$ and $q$ let 
\[
h=(r,q)\:.\tag{3.10}
\]
Then $r, q$ and $h$ can be written as 
\begin{align*}
&r=\tilde{p}_1^{\alpha_1}\cdots\: \tilde{p}_s^{\alpha_s} r_0\ \ (\tilde{p}_j, r_0)=1\\
&q=\tilde{p}_1^{\beta_1}\cdots\: \tilde{p}_s^{\beta_s} q_0\ \ (\tilde{p}_j, q_0)=1\\
&h=\tilde{p}_1^{\gamma_1}\cdots\: \tilde{p}_s^{\gamma_s}\:,\ \text{with prime numbers $\tilde{p}_i$,}\tag{3.11}
\end{align*}
where $\alpha_j$, $\beta_j$ and $\gamma_j$ are positive integers with $\gamma_j=\min(\alpha_j, \beta_j)$, $j=1,\ldots, s$. Define
\[
h_1=\tilde{p}_1^{\delta_1}\cdots\: \tilde{p}_s^{\delta_s}\:,\tag{3.12}
\]
where $\delta_j=\alpha_j$ or $0$ according as $\alpha_j=\gamma_j$ or not. Then
$h_1\mid h$. Write
\[
h_2=\frac{h}{h_1}\:.\tag{3.13}
\]
Then
\[
h_1h_2=h,\ (h_1, h_2)=1,\ \left(\frac{r}{h_1},\:\frac{q}{h_2}\right)=1\:.\tag{3.14}
\]
\end{definition}
\begin{lemma}\label{lem32}
Let $d\mid k$, $g\mid k$ and $(m,k)=(f,k)=1$. Let also $\chi\bmod\: g$ be induced by the primitive character $\chi^* \bmod g^*$. Then 
$$|G(d, f, m, \chi_g, k)|\leq {g^*}^{1/2}\:.$$
\end{lemma}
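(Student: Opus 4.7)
The plan is to reduce the generalised Gauss sum $G(d,f,m,\chi_g,k)$ to a classical Gauss sum $\tau(\chi^*)$ of the underlying primitive character and then invoke the standard identity $|\tau(\chi^*)|=(g^*)^{1/2}$.

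First I would observe that, since $g^*\mid g\mid k$ and $(n,k)=1$, every $n$ in the summation range satisfies $(n,g^*)=1$; hence $\chi_g(n)=\chi^*(n\bmod g^*)$ throughout, and $\chi_g$ may be replaced by $\chi^*$ in the sum. Next, I would detect the congruence condition $n\equiv f\pmod d$ by additive characters modulo~$d$, and exchange the order of summation. Using $d\mid k$ to combine the two exponential phases into $e(m_a n/k)$ with $m_a:=m+a(k/d)$, this yields
$$G(d,f,m,\chi_g,k)=\frac{1}{d}\sum_{a=0}^{d-1}e(-af/d)\,T(a),\qquad T(a):=\sum_{\substack{n\,(\mathrm{mod}\,k)\\(n,k)=1}}\chi^*(n)\,e\!\left(\tfrac{m_a n}{k}\right).$$
Each $T(a)$ is a Gauss sum of the character mod~$k$ induced from~$\chi^*$, twisted by an additive phase $m_a/k$.

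Third, I would apply the classical explicit evaluation of such twisted Gauss sums (e.g.\ Davenport, \emph{Multiplicative Number Theory}, Ch.~9): writing $h_a=(m_a,k)$, one has $T(a)=0$ unless $g^*\mid k/h_a$, in which case $T(a)$ factors as a product of a unimodular $\chi^*$-twist, a M\"obius--Ramanujan correction and $\tau(\chi^*)$ itself. Because $(m,k)=(f,k)=1$, an analysis of how $h_a$ varies with $a$, combined with the weights $e(-af/d)$, shows that the $a$-average collapses to a single (unimodular) copy of $\tau(\chi^*)$, yielding $|G|\le|\tau(\chi^*)|=(g^*)^{1/2}$.

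The main obstacle lies in this last bookkeeping step. A naive triangle inequality on the $T(a)$'s is too weak, since individually $|T(a)|$ can exceed $(g^*)^{1/2}$ when $(m_a,k)>1$; one therefore really needs the precise explicit formula together with the observation that the phases $e(-af/d)$ interfere destructively for all but one ``effective'' residue class of~$a$. The coprimality hypotheses $(m,k)=(f,k)=1$ and the divisibilities $d\mid k$, $g\mid k$ are all used essentially at this juncture, and form the heart of the argument.
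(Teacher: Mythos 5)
The paper's ``proof'' of Lemma~\ref{lem32} is just a citation of Lemma~3 in Liu--Zhan \cite{liu}, so there is no in-paper argument to compare against; what matters is whether your self-contained proposal actually proves the bound.

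Your first two reductions are fine: replacing $\chi_g$ by $\chi^*$ (legitimate since $g^*\mid g\mid k$ and every $n$ in the sum is coprime to $k$), and detecting $n\equiv f\pmod d$ via additive characters, using $d\mid k$ to merge the phases into $e(m_a n/k)$ with $m_a=m+a(k/d)$. The problem is the third step, which is where the entire content of the lemma resides, and which you leave unproved. You correctly observe that the triangle inequality on the $T(a)$ fails because $|T(a)|$ can be as large as $\frac{\phi(k)}{\phi(k/h_a)}(g^*)^{1/2}\gg (g^*)^{1/2}$ when $h_a=(m_a,k)>1$. But you then merely assert that the phases $e(-af/d)$ ``interfere destructively for all but one effective residue class of $a$,'' so that ``the $a$-average collapses to a single (unimodular) copy of $\tau(\chi^*)$.'' No argument is given for this, and it is not even the right picture: the quantity $G(d,f,m,\chi_g,k)$ is frequently zero or of modulus strictly less than $(g^*)^{1/2}$, so there is no identity with a single copy of $\tau(\chi^*)$ lurking -- the statement is genuinely an inequality, and establishing the cancellation in the coherent $a$-sum (with the explicit Gauss-sum formula plugged in, the divisor structure of $h_a$ tracked, and the $\chi^*$-twists and M\"obius factors handled) requires a real combinatorial computation that you have not supplied. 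As you yourself note, this ``bookkeeping'' is ``the heart of the argument''; leaving it at the level of an assertion means the proposal does not constitute a proof.

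The natural way to close the gap -- and, I believe, what underlies Liu--Zhan's Lemma~3 -- is to bypass the global interference analysis entirely via multiplicativity. By the Chinese Remainder Theorem the sum $G(d,f,m,\chi_g,k)$ factors over the primes $p\mid k$ into local sums
\[
G_p=\sum_{\substack{n\bmod p^{\alpha}\\(n,p)=1\\ n\equiv f\pmod{p^{\delta}}}}\chi^*_p(n)\,e\!\left(\frac{m_pn}{p^{\alpha}}\right),
\]
where $p^{\alpha}\Vert k$, $p^{\delta}\Vert d$, $p^{\gamma}\Vert g^*$, $\chi^*_p$ is the $p$-component of $\chi^*$, and $(m_p,p)=1$. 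It then suffices to show $|G_p|\le p^{\gamma/2}$ for each $p$, and this is elementary: after the unimodular substitution $n\mapsto fn$ one may take $f=1$; if $\delta=0$ one is looking at a Gauss sum of a (possibly imprimitive) character mod $p^{\alpha}$ with a shift coprime to $p$, which is $\le p^{\gamma/2}$ by the standard evaluation; if $\delta\ge 1$ one writes $n=1+p^{\delta}t$ and finds that the sum over $t$ either vanishes (when $\alpha>\gamma$, because the resulting complete exponential sum over $t\bmod p^{\alpha-\gamma}$ is zero since $(m,p)=1$) or reduces to a sum of at most $p^{\delta}$ primitive Gauss sums each of modulus $p^{\gamma/2}$, divided by the normalising $p^{\delta}$. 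In that local setting the triangle inequality \emph{does} give the sharp bound precisely because the coprimality $(m,p)=1$ forces every nonvanishing $m_a$ to be coprime to $p$; globally this fails, which is exactly the pitfall your proposal runs into. I would recommend rewriting the proof along these multiplicative lines rather than trying to make the coherent global $a$-sum work.
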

\begin{proof}
This is Lemma 3 of \cite{liu}.
\end{proof}
\begin{lemma}\label{lem33}
Let $d\mid k$ and $(m,k)=(f,k)=1$. Let also $\chi^0(\bmod\: k)$ be the principal character. Then for $(d, k/d)>1$,
$$G(d, f, m, \chi^0)=0$$
and for $(d, k/d)=1$,
$$G(d, f, m, \chi^0)=\mu\left(\frac{k}{d}\right)e\left( \frac{fmt}{d}\right)\:,$$
where $t$ is defined by $tk/d\equiv 1(\bmod\: d)$.
\end{lemma}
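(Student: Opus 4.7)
The strategy is to split on whether $(d, k/d) > 1$ or $(d, k/d) = 1$. With $\chi^0$ the principal character mod $k$, the sum reduces to
\[
G(d, f, m, \chi^0) = \sum_{\substack{1 \le n \le k \\ n \equiv f \,(\bmod\, d) \\ (n, k) = 1}} e\!\left(\frac{mn}{k}\right),
\]
and I write $m' = k/d$ throughout.

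In the case $(d, m') > 1$, I would exhibit a shift $n \mapsto n + r$ that preserves the summation set but multiplies the exponential by a nontrivial root of unity. I require of $r$: (i) $d \mid r$ (so the congruence mod $d$ is preserved); (ii) every prime divisor of $k$ divides $r$ (so that $(n,k)=1$ is preserved, because if $p \mid k$ and $p \mid r$ then $p \nmid n$ implies $p \nmid n+r$); (iii) $k \nmid r$ (so that $e(mr/k) \ne 1$, since $(m,k)=1$). Given a prime $p$ dividing both $d$ and $m'$, I would take
\[
r = d \prod_{\substack{\ell \text{ prime}\\ \ell \mid m',\, \ell \nmid d}} \ell;
\]
then (i) and (ii) are immediate, while (iii) holds because the product over $\ell$ is at most $m'/p$ (the prime $p$ divides $m'$ but is excluded from the product), whence $r \le dm'/p < k$. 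Reindexing $n \mapsto n+r$ then yields $G = e(mr/k)\,G$, and since $e(mr/k) \ne 1$, we conclude $G = 0$.

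In the case $(d, m') = 1$, I would parameterize $n$ via the Chinese Remainder Theorem. Fix $s, t$ with $sd \equiv 1 \,(\bmod\, m')$ and $tm' \equiv 1 \,(\bmod\, d)$. Each $n$ in the summation set corresponds uniquely to a class $c$ mod $m'$ through $n \equiv csd + ftm' \,(\bmod\, k)$, and the condition $(n,k)=1$ becomes $(c, m') = 1$ (since $(f, d) = 1$ is automatic from $(f,k)=1$). The exponential splits as $mn/k \equiv mcs/m' + mft/d \pmod 1$, so
\[
G(d, f, m, \chi^0) = e\!\left(\frac{mft}{d}\right) \sum_{\substack{1 \le c \le m' \\ (c, m') = 1}} e\!\left(\frac{msc}{m'}\right).
\]
The inner sum is the Ramanujan sum $c_{m'}(ms)$, and since $(m,k)=1$ and $(s, m') = 1$ together give $(ms, m') = 1$, it evaluates to $\mu(m')$, producing the claimed identity $\mu(k/d)\, e(fmt/d)$.

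The main obstacle is the bookkeeping in Case 1: one must check that the explicit shift $r$ meets all three divisibility conditions at once, and that such an $r$ exists precisely under the hypothesis $(d, k/d) > 1$ (when the common prime $p$ is what makes the product strictly smaller than $m'$). Case 2 is essentially routine once the CRT parameterization is set up and the classical evaluation $c_{m'}(a) = \mu(m')$ for $(a, m') = 1$ is invoked.
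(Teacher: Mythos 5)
Your proof is correct. The paper itself offers no argument for this lemma---it simply cites Lemma~7 of Liu and Zhan~\cite{liu}---so what you have written is a genuine, self-contained proof where the paper defers to a reference. Both branches are handled properly. In the case $(d,k/d)>1$, the shift $r=d\prod_{\ell\mid m',\,\ell\nmid d}\ell$ indeed satisfies $d\mid r$, $\mathrm{rad}(k)\mid r$ (any prime of $k$ divides $d$ or $m'$, and in the latter case it either divides $d$ or appears in the product), and $0<r<k$: since a common prime $p$ of $d$ and $m'$ is omitted from the product, the product divides $m'/p$, so $r\le dm'/p<dm'=k$. Hence the reindexing $n\mapsto n+r$ (which is a bijection of the index set because $p\mid r$ for every prime $p\mid k$, so residues modulo $d$ and coprimality to $k$ are both preserved) yields $G=e(mr/k)G$ with $e(mr/k)\neq1$, forcing $G=0$. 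In the case $(d,k/d)=1$, the CRT parameterization $n\equiv csd+ftm'\pmod k$ is correct, the constraint $(n,k)=1$ reduces to $(c,m')=1$ because $(f,d)=1$ automatically, the exponential splits as $mn/k\equiv mcs/m'+mft/d\pmod 1$, and the inner sum is the Ramanujan sum $c_{m'}(ms)=\mu(m')$ since $(ms,m')=1$. This matches $\mu(k/d)\,e(fmt/d)$ exactly, with your $t$ agreeing with the lemma's $t$ defined by $t(k/d)\equiv1\pmod d$. The only cosmetic remark is that in Case~1 the phrase ``at most $m'/p$'' is better stated as ``divides $m'/p$,'' which is what you actually need and is true; the argument is otherwise complete.
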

\begin{proof}
This is Lemma 7 of \cite{liu}.
\end{proof}
The basic identity for the asymptotic evaluation of the major arcs is the following.
\begin{lemma}\label{lem34}
Let $a, q, r$ be positive integers, and $h, h_1, h_2$ defined as in (3.10)--(3.13), such
that (3.14) holds. Then 
\begin{align*}
S\left(\frac{a}{q}+\lambda, r, b\right)&=\frac{1}{\phi(r/h_1)\phi(q/h_2)}\sum_{\xi(\bmod\: r/h_1)}\overline{\xi}(b)\sum_{\eta(\bmod\: q/h_2)}G(h, b, a, \overline{\eta}, q)\\
\ \ &\times  \sum_{n\leq N}\xi\eta(n)\Lambda(n)e(n\lambda)+O(L^2)\:,
\end{align*}
where 
$$\sum_{\xi(\bmod r/h_1)}$$ 
respectively 
$$\sum_{\eta(\bmod q/h_2)}$$ 
are
over the Dirichlet characters $\bmod\:r/h_1$ respectively $q/h_2$.
\end{lemma}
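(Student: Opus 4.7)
The plan is to detect the condition $n\equiv b(\bmod\:r)$ and the phase $e(an/q)$ using Dirichlet characters of the two coprime moduli $r/h_1$ and $q/h_2$ whose product equals $\mathrm{lcm}(r,q)=rq/h$, and then to recognize the resulting exponential sum over residues $c(\bmod\:q)$ as the generalized Gauss sum $G(h,b,a,\overline{\eta},q)$ of (3.8).

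First I would discard $n\leq N$ with $(n,rq)>1$. Since $\Lambda(n)\neq 0$ only for prime powers, such $n$ must be $p^k$ with $p\mid rq$, and their total $\Lambda$-contribution is bounded by $\omega(rq)\log N=O(L^2)$. Restricting to $(n,rq)=1$ with $n\equiv b(\bmod\:r)$, I would then group such $n$ by the residue $c\equiv n(\bmod\:q)$, where $1\leq c\leq q$ and $(c,q)=1$; compatibility with $n\equiv b(\bmod\:r)$ forces $c\equiv b(\bmod\:h)$ since $h=(r,q)$. The phase becomes $e(ac/q)$, depending only on $c$. By (3.14) and CRT, the pair of congruences $n\equiv b(\bmod\:r)$, $n\equiv c(\bmod\:q)$ determines $n$ uniquely modulo $M:=(r/h_1)(q/h_2)=\mathrm{lcm}(r,q)$; call this class $n_0(c)$.

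Next I would apply Dirichlet character orthogonality mod $M$ to the inner progression sum:
\[
\sum_{\substack{n\leq N\\ n\equiv n_0(c)(\bmod\:M)}}\Lambda(n)e(n\lambda)=\frac{1}{\phi(M)}\sum_{\psi(\bmod\:M)}\overline{\psi}(n_0(c))\sum_{n\leq N}\psi(n)\Lambda(n)e(n\lambda).
\]
Because $(r/h_1,q/h_2)=1$, every $\psi(\bmod\:M)$ factors uniquely as $\psi=\xi\eta$ with $\xi(\bmod\:r/h_1)$ and $\eta(\bmod\:q/h_2)$, and $\phi(M)=\phi(r/h_1)\phi(q/h_2)$. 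Since $n_0(c)\equiv b(\bmod\:r/h_1)$ while $n_0(c)\equiv c(\bmod\:q/h_2)$, one has $\overline{\psi}(n_0(c))=\overline{\xi}(b)\overline{\eta}(c)$. Putting the sum over $c$ innermost, the expression
\[
\sum_{\substack{1\leq c\leq q\\ (c,q)=1\\ c\equiv b(\bmod\:h)}}\overline{\eta}(c)\,e\!\left(\frac{ac}{q}\right)
\]
is by definition (3.8) exactly $G(h,b,a,\overline{\eta},q)$. Collecting terms produces the claimed identity with error $O(L^2)$.

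The only real obstacle is bookkeeping around the decomposition $h=h_1h_2$: one must read off from (3.11)--(3.13) that $\mathrm{lcm}(r,q)$ factors as $(r/h_1)(q/h_2)$ with coprime factors (this is precisely (3.14)), and one must track carefully which residue each of $\xi,\eta$ sees. Once this is set up, the CRT decomposition, the character orthogonality, and the identification of the $c$-sum as the generalized Gauss sum are all mechanical.
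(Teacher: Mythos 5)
The paper does not prove this lemma at all; it cites Lemma~2 of Liu--Zhan \cite{liu} verbatim. Your proof is therefore necessarily a ``different route'' in the trivial sense that you actually derive the identity. The derivation is correct and is the expected one: discarding $n$ with $(n,rq)>1$ costs only $O(\omega(rq)\log N)=O(L^2)$ because $\Lambda$ is supported on prime powers; once $(n,rq)=1$ and $n\equiv b\:(\bmod\ r)$, the residue $c\equiv n\:(\bmod\ q)$ is constrained to $(c,q)=1$, $c\equiv b\:(\bmod\ h)$, and CRT identifies the joint condition with a single residue class $n_0(c)$ modulo $M=\mathrm{lcm}(r,q)=rq/h$. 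The key structural facts you invoke from (3.11)--(3.14) — that $h_1h_2=h$, that $M=(r/h_1)(q/h_2)$ with $(r/h_1,q/h_2)=1$, so that $\phi(M)=\phi(r/h_1)\phi(q/h_2)$ and every character mod $M$ factors uniquely as $\xi\eta$ with $\overline{\psi}(n_0(c))=\overline{\xi}(b)\overline{\eta}(c)$ — are exactly right, and pushing the $c$-sum inside recovers $G(h,b,a,\overline{\eta},q)$ from (3.8). One small point worth recording explicitly is that orthogonality requires $(n_0(c),M)=1$, which holds when $(b,r)=1$ (the only case of interest, since in the application $b\in\{u_0,\,N-2u_0\}$ and $r=P^*$); in the degenerate case $(b,r)>1$ both sides are $O(L^2)$, so nothing breaks. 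Beyond that, the argument is complete and sound.
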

\begin{proof}
This is Lemma 2 of \cite{liu}.
\end{proof}
We now decompose the sum in Lemma \ref{lem34} in three partial sums.
\begin{definition}\label{def33}
$$S\left(\frac{a}{q}+\lambda r, b\right)=S_0(a, q, \lambda, r, b)+S_1(a, q, \lambda, r, b)+S_2(a, q, \lambda, r, b)\:,$$
where $S_0$, $S_1$, respectively $S_2$ are the sums corresponding to\\
(0) $\xi=\xi^0\left(\bmod \frac{r}{h_1}\right),\ \eta=\eta^0\left(\bmod\: \frac{q}{h_2}\right)$\\
(i) $\xi=\xi^0\left(\bmod \frac{r}{h_1}\right),\ \eta\neq \eta^0\left(\bmod\: \frac{q}{h_2}\right)$\\
(ii) $\xi\neq \xi^0\left(\bmod \frac{r}{h_1}\right)$,\\
respectively
($\xi^0$ resp. $\eta^0$ are the principal characters $\bmod\: r/h_1$ resp. $\bmod\: q/h_2$).
\end{definition}
The asymptotics of the major arcs contribution and thus also of (3.6)  comes from the
sum $S_0$. To state the result we need the following definitions:
\begin{definition}
Let $q, r$ be positive integers and $(q, r)=h$. For $(a, q)=1$ and $(b,r)=1$ define
$$f(r, q, a, b):=\left\{ 
  \begin{array}{l l}
    \frac{\mu(q/h)}{\phi(rq/h)}\: e\left(\frac{abt}{h}\right)\:, & \quad \text{if $\ (q/h, h)=1$, $tq/h\equiv 1(\bmod\: h)$}\vspace{2mm}\\ 
      0\:, & \quad \text{if $\ (q/h, h)>1$.}
  \end{array} \right.$$
\end{definition}
Then we have:
\begin{lemma}\label{lem35}
$$S_0=f(r, q, a, b)\sum_{n\leq N}e(n\lambda)+O(|\lambda| N^{3/2}\log^3 N)+O(N^{1/2}\log^3 N)\:.$$
\end{lemma}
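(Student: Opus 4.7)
The plan is to extract the contribution of the single character pair $(\xi^0,\eta^0)$ from Lemma \ref{lem34}, identify its leading coefficient with $f(r,q,a,b)$ via Lemma \ref{lem33} plus multiplicativity of $\phi$, and then use GRH to replace the von Mangoldt exponential sum by $\sum_{n\le N}e(n\lambda)$.

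First, I would write out the $(\xi^0,\eta^0)$-term of Lemma \ref{lem34}. Since $(b,r)=1$ implies $\overline{\xi^{0}}(b)=1$ and $\xi^{0}\eta^{0}(n)=\mathbf{1}_{(n,\,rq/h)=1}$ (using $(r/h_1)(q/h_2)=rq/h$ from (3.14)), this gives
\[
S_0=\frac{G(h,b,a,\overline{\eta^{0}},q)}{\phi(r/h_1)\,\phi(q/h_2)}\sum_{\substack{n\le N\\(n,\,rq/h)=1}}\Lambda(n)\,e(n\lambda)+O(L^2).
\]
Next, I would apply Lemma \ref{lem33} to the Gauss sum. The only subtlety is that here the character is the principal character modulo $q/h_2$ rather than modulo $q$; but because $(n,q)=1$ automatically forces $(n,q/h_2)=1$, the Gauss sum coincides with $G(h,b,a,\chi^{0})$ with $\chi^{0}\bmod q$. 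Lemma \ref{lem33} then gives the dichotomy: either $(q/h,h)>1$ and the sum vanishes, or $(q/h,h)=1$ and the sum equals $\mu(q/h)\,e(abt/h)$ with $tq/h\equiv 1\pmod h$. Using $(r/h_1,q/h_2)=1$ from (3.14) once more, multiplicativity of $\phi$ yields $\phi(r/h_1)\phi(q/h_2)=\phi(rq/h)$, and comparing with Definition 3.4 identifies the full prefactor as $f(r,q,a,b)$.

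It then remains to replace the twisted prime sum by a plain exponential sum. The coprimality condition $(n,rq/h)=1$ can be dropped with error $O(L^2)$, since its removal excludes at most the $n=p^k$ with $p\mid rq/h$, contributing $\ll\log(rq/h)\cdot\log N\ll L^2$. Under the Generalized Riemann Hypothesis one has $\psi(t)=t+O(t^{1/2}\log^2 t)$, so partial summation gives
\[
\sum_{n\le N}\Lambda(n)\,e(n\lambda)=\sum_{n\le N}e(n\lambda)+O\bigl(N^{1/2}\log^{2}N\bigr)+O\bigl(|\lambda|\,N^{3/2}\log^{2}N\bigr),
\]
where the second error comes from estimating $\int_0^N (\psi(t)-t)\,2\pi i\lambda\,e(\lambda t)\,dt$. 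Since $|f(r,q,a,b)|\le 1$, multiplying through gives the stated bounds, with the minor $\log^{2}\to\log^{3}$ inflation absorbing the error $O(L^2)$ from Lemma \ref{lem34} as well as a possible $\log$ factor from the Brun--Titchmarsh--style prime divisor count.

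The bookkeeping in the first two steps is essentially a matching exercise, so the technical heart of the proof is the GRH-based approximation in the last step; that is where the explicit constants $N^{1/2}\log^{3}N$ and $|\lambda|N^{3/2}\log^{3}N$ in the statement come from, and where one has to be careful that the error survives multiplication by the (possibly oscillating but bounded) factor $f(r,q,a,b)$.
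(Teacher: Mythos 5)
Your proof is correct and follows essentially the same route as the paper's: extract the principal-character term from Lemma~\ref{lem34}, evaluate the Gauss sum via Lemma~\ref{lem33} and use $(r/h_1,q/h_2)=1$ from (3.14) to recognize the prefactor as $f(r,q,a,b)$, then replace the von Mangoldt sum by $\sum_{n\le N}e(n\lambda)$ via GRH and partial summation. Your version is in fact slightly tidier about where the coprimality condition is dropped (the paper's intermediate display tacitly replaces $\sum\chi^0(n)\Lambda(n)e(n\lambda)$ by $\sum\Lambda(n)e(n\lambda)+O(L^2)$ before applying the $\psi(t)-t$ estimate), and your $\log^2$ versus the paper's $\log^3$ is merely a matter of how loosely one quotes the GRH error.
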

\begin{proof}
\begin{align*}
S_0&=\frac{1}{\phi(r/h_1)\phi(q/h_2)}\: G(h, b, a, \overline{\eta}, q)\:\sum_{n\leq N}\chi^0(n)\Lambda(n)e(n\lambda)\\
&=\frac{1}{\phi(rq/h)}\:G(h, b, a, n_0, q/h_2)\sum_{n\leq N}e(n\lambda)+\sum_{n\leq N}\chi^0(n)(\Lambda(n)-1)e(n\lambda)\\
&\ \ +O\left(\frac{L^2}{\phi(rq/h)}\right)\:.
\end{align*}
By the Riemann Hypothesis and partial summation we have
$$\sum_{n\leq N}\chi^0(n)(\Lambda(n)-1)e(n\lambda)=O(|\lambda|N^{3/2}\log^3 N)+O(N^{1/2}\log^3 N)\:.$$
The result now follows from Lemma \ref{lem33}.
\end{proof}
We now show that for $\frac{1}{2}R\leq r\leq 2R$ the contribution of $S_1$ and 
$S_2$ to the major arcs contribution is negligible.
\begin{definition}\label{def35}
For $(b_1, b_2, b_3)\in\mathbb{Z}^3$ let
$$\int_0:=\int_{E_1(R)}S(\alpha, r, b_1)S(\alpha, r, b_2)S(\alpha, r, b_3)e(-N\alpha)d\alpha$$
and
$$\int_1:=\int_{E_1(R)}S_0(\alpha, r, b_1)S_0(\alpha, r, b_2)S_0(\alpha, r, b_3)e(-N\alpha)d\alpha\:.$$
\end{definition}
\begin{lemma}\label{lem36}
Let $\frac{1}{2}R\leq r\leq 2R$, $(b_1, b_2, b_3)\in\mathbb{Z}^3$. Then 
$$\int_0=\int_1+\:O(N^{11/4}R^{-2})\:.$$
\end{lemma}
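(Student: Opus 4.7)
The plan is to expand $S(\cdot, r, b_i) = S_0(\cdot, r, b_i) + T_i$, where $T_i := S_1(\cdot, r, b_i) + S_2(\cdot, r, b_i)$ collects the non-principal character contributions from Lemma~\ref{lem34}. Telescoping the difference of the integrands,
\begin{align*}
\prod_{i=1}^3 S(\alpha, r, b_i) - \prod_{i=1}^3 S_0(\alpha, r, b_i) = T_1 S_2 S_3 + S_{0,1} T_2 S_3 + S_{0,1} S_{0,2} T_3,
\end{align*}
(subscripts denoting the slot $b_i$), expresses $\int_0 - \int_1$ as a sum of three integrals, each containing exactly one factor $T_i$ and two factors that are either $S$ or $S_0$.

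For each of these three integrals I would pull the $T_i$-factor out in $L^\infty(E_1(R))$ and apply Cauchy--Schwarz to the remaining two factors in $L^2[0,1]$:
\begin{align*}
\biggl|\int_{E_1(R)} T_i(\alpha) F(\alpha) F'(\alpha) e(-N\alpha)\, d\alpha\biggr| \le \sup_{\alpha \in E_1(R)} |T_i(\alpha)| \cdot \|F\|_2 \|F'\|_2.
\end{align*}
Parseval gives $\|S(\cdot, r, b)\|_2^2 = \sum_{n \le N,\, n \equiv b(\bmod r)} \Lambda(n)^2 \ll (N\log N)/R$, and an analogous $L^2$ bound for $S_0$ follows by integrating the main-term description of Lemma~\ref{lem35} major-arc by major-arc.

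The heart of the argument is the pointwise bound on $T_i$ over $E_1(R)$. By Lemma~\ref{lem34}, $T_i(\alpha)$ is a double sum, over non-principal $\xi \bmod r/h_1$ or $\eta \bmod q/h_2$, of $\bar\xi(b_i) G(h, b_i, a, \bar\eta, q) \sum_{n \le N} \xi\eta(n) \Lambda(n) e(n\lambda)$, divided by $\phi(r/h_1)\phi(q/h_2)$. Lemma~\ref{lem32} bounds the Gauss sum by $q^{1/2}$, and GRH together with partial summation yields
\begin{align*}
\sum_{n \le N} \chi(n) \Lambda(n) e(n\lambda) \ll (1 + |\lambda|N)\, N^{1/2} \log^2 N
\end{align*}
for any non-principal $\chi$ modulo any $k \le rq$. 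Using $q \le P = R^3 L^{3C}$, $|\lambda|N \le N/Q = R^3 L^{4C}$ on $E_1(R)$, and $R \le N^{1/20}$, careful bookkeeping produces $\sup_{E_1(R)} |T_i| \ll N^{7/4}/R$, with logarithmic losses absorbed into the implicit constant by taking $C$ large. Combined with Parseval this gives
\begin{align*}
|\int_0 - \int_1| \ll \frac{N^{7/4}}{R} \cdot \frac{N \log N}{R} \ll \frac{N^{11/4}}{R^2},
\end{align*}
as required.

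The main obstacle is securing the sup bound on $T_i$: one must simultaneously track the denominator $\phi(r/h_1)\phi(q/h_2)$, the Gauss-sum factor $q^{1/2}$, the number of non-principal characters summed over, and the partial-summation factor $(1+|\lambda|N)$, and check that the parameter constraints of Definition~\ref{def31} leave all these within the budget $N^{7/4}/R$. Once the sup estimate is in place, Cauchy--Schwarz plus Parseval conclude the argument.
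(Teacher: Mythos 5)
Your overall strategy matches the paper's: decompose each $S$ into the pieces of Definition~\ref{def33}, observe that any term of the product containing a non-$S_0$ factor involves non-principal characters $\xi\eta$ via Lemma~\ref{lem34}, bound the corresponding $\sum_{n\le N}\xi\eta(n)\Lambda(n)e(n\lambda)$ pointwise on $E_1(R)$ using GRH (together with the Gauss-sum bound $|G|\le q^{1/2}$ from Lemma~\ref{lem32}), and bound the remaining two factors more crudely. The only methodological difference is at the last step: the paper bounds the remaining two factors trivially in $L^\infty$ by $O(N/R)$ each and then integrates this pointwise product over the small total measure of $E_1(R)$, whereas you keep the $T_i$ factor in $L^\infty$ but treat the other two via Cauchy--Schwarz and Parseval in $L^2$. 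Both close the argument because $R\le N^{1/20}$ gives a large polynomial margin; your $L^2$ route is a bit sharper in $R$, the paper's is shorter, and the telescoping into three terms is merely a cleaner rearrangement of the paper's $27$-term expansion. So this is essentially the paper's argument, executed with a mildly different bookkeeping of the inert factors.

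One small correction in your justification: the logarithmic losses in $\sup_{E_1(R)}|T_i|$ are \emph{not} absorbed ``by taking $C$ large.'' Increasing $C$ makes $P=R^3L^{3C}$ larger and $Q=NR^{-3}L^{-4C}$ smaller, which \emph{enlarges} $E_1(R)$ and hence worsens this bound; $C$ is chosen large for the sake of the minor-arc estimate (Lemma~\ref{lem31}), not this one. What actually absorbs the $L^{O(C)}$ factors here is the polynomial slack: with $R\le N^{1/20}$ you need roughly $R^{11/2}L^{O(C)}\le N^{5/4}$, and $R^{11/2}\le N^{11/40}$ leaves a power of $N$ to spare. Also, your Parseval bound should carry an extra $\log$ (indeed $\sum_{n\le N,\,n\equiv b}\Lambda(n)^2\ll N(\log N)^2/R$ via Brun--Titchmarsh), and the $L^2$ bound for $S_0$ on $E_1(R)$ deserves a line of justification (e.g.\ via $S_0=S-T$ and the measure of $E_1$, or by integrating $|f(r,q,a,b)\sum_{n\le N}e(n\lambda)|^2$ arc by arc); neither affects the final exponent.
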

\begin{proof}
We partition the sums $S(\frac{a}{q}+\lambda, r, b_i)$ according to Definition
\ref{def33}. The product 
$$\prod_{i=1}^3 S(\alpha, r, b_i)$$ 
becomes a sum of 
27 products 
$$S_{j_1}(\alpha, r, b_1)S_{j_2}(\alpha, r, b_2)S_{j_3}(\alpha, r, b_3)$$ 
with 
$j_i\in\{0, 1, 2 \}$.\\
Assume that $(j_1, j_2, j_3)\neq (0,0,0)\:.$ Without loss of generality we may assume
$j_1\neq 0$. Then the characters $\xi \eta$ appearing in the sums 
$$\sum_{n\leq N} \xi \eta(n)\Lambda(n)e(n\lambda)$$
by Lemma \ref{lem34}  are non-principal. Therefore by the GRH we have the estimate
$$\sum_{n\leq N} \xi \eta(n)\Lambda(n)e(n\lambda)=O(N^{1/2}\log^3(NR))+O(|\lambda|N^{\:3/2}\log^3(NR))\:.$$
The other sums may be trivially estimated by
$$S_{j_i}=O(NR^{-1})\:.$$
This proves Lemma \ref{lem36}.
\end{proof}
\begin{lemma}\label{lem37}
$$\int_0=\left(\frac{1}{2}N^2+O(N^2L^{-C})\right)\sum_{q\leq P}\sum_{\substack{a=1 \\ (a,q)=1}}^q f(r, q, a, b_1)f(r, q, a, b_2)f(r, q, a, b_3)e\left(-\frac{aN}{q}\right)\:.$$
\end{lemma}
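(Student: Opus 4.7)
The plan is to reduce the statement to an evaluation of a classical exponential-sum integral, using Lemma \ref{lem36} to replace $S$ by $S_0$ throughout, and Lemma \ref{lem35} to separate the arithmetic factor $f(r,q,a,b_i)$ from an analytic factor that reduces to a power of a geometric sum.

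First I would invoke Lemma \ref{lem36}, which already gives $\int_0 = \int_1 + O(N^{11/4}R^{-2})$; since $R\le N^{1/20}$ (Definition \ref{def31}) this is absorbed into the claimed error $O(N^2 L^{-C})$. It therefore suffices to evaluate $\int_1$. On a major arc around $a/q$ with $|\lambda|\le 1/(qQ)$, Lemma \ref{lem35} writes
\[
S_0(\alpha,r,b_i)=f(r,q,a,b_i)\,T(\lambda)+E_i(\lambda),\qquad T(\lambda):=\sum_{n\le N}e(n\lambda),
\]
with $E_i(\lambda)\ll N^{1/2}\log^3 N+|\lambda|N^{3/2}\log^3 N$. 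Multiplying out the three factors gives one main term $f_1 f_2 f_3\,T(\lambda)^3$ and seven cross terms in which at least one $S_0$ is replaced by its error $E_i$. Since $|f(r,q,a,b)|\le 1/\phi(rq/h)\le 1$ trivially and $|T(\lambda)|\le \min(N,\|\lambda\|^{-1})$, each cross term can be bounded by pairing the error factor against trivial estimates for the remaining factors, then integrating $\int_{-1/(qQ)}^{1/(qQ)}d\lambda$ and summing $\sum_{q\le P}\sum_{(a,q)=1}$; the resulting total is easily $O(N^2 L^{-C})$ provided $C$ is large enough, using $P=R^3L^{3C}$ and $Q=NR^{-3}L^{-4C}$.

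Next I would handle the main term. Pulling the factors out, the main contribution is
\[
\sum_{q\le P}\sum_{\substack{a=1\\(a,q)=1}}^{q}f(r,q,a,b_1)f(r,q,a,b_2)f(r,q,a,b_3)e\!\left(-\frac{aN}{q}\right)\int_{-1/(qQ)}^{1/(qQ)}T(\lambda)^3\,e(-N\lambda)\,d\lambda .
\]
The inner integral, when extended to $[0,1]$, equals the number of representations $n_1+n_2+n_3=N$ with $1\le n_i\le N$, namely $\binom{N-1}{2}=\tfrac{1}{2}N^2+O(N)$. The tail estimate $\int_{|\lambda|>1/(qQ)}|T(\lambda)|^3 d\lambda\ll (qQ)^2$ together with $qQ\le PQ=NL^{-C}$ gives a total tail contribution (after the outer summation) that is again $O(N^2 L^{-C})$. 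Thus the inner integral equals $\tfrac12 N^2+O(N^2 L^{-C})$ uniformly in $q\le P$.

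Combining these two steps yields the stated asymptotic. The main technical obstacle is verifying that the cross terms, which involve the non-absolutely-convergent oscillatory factor $e(-aN/q)$ and the sum over $a$ modulo $q$, do not create a loss of a power of $N$: here one exploits that the trivial bound $|f|\le 1/\phi(rq/h)$ combined with $\phi(q)\gg q/\log\log q$ keeps $\sum_q \sum_a 1$ of manageable size, and the $|\lambda|$-weighted error $|\lambda|N^{3/2}\log^3 N$ is killed by the short interval of integration $|\lambda|\le 1/(qQ)$ once $Q$ is large (i.e.\ $C$ is chosen large enough in Definition \ref{def31}). With these estimates in place, everything outside the displayed main term is $O(N^2 L^{-C})$, which is exactly the claim.
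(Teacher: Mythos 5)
Your proposal follows the paper's proof very closely: invoke Lemma~\ref{lem36} to pass from $\int_0$ to $\int_1$, use Lemma~\ref{lem35} to extract the arithmetic factors $f(r,q,a,b_i)$ and reduce the analytic part to $\int T(\lambda)^3 e(-N\lambda)\,d\lambda$, then evaluate that integral by extending to $[-1/2,1/2]$ and bounding the tail via $|T(\lambda)|\le\min(N,|\lambda|^{-1})$. The paper compresses the cross-term bookkeeping into the phrase ``by Lemmas 3.5 and 3.6''; you spell it out, but the skeleton is identical.

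There is, however, one explicit claim in your write-up that does not check out, and it concerns the very first step. You assert that because $R\le N^{1/20}$, the error $O(N^{11/4}R^{-2})$ coming from Lemma~\ref{lem36} ``is absorbed into the claimed error $O(N^2L^{-C})$.'' But $R\le N^{1/20}$ gives $R^{-2}\ge N^{-1/10}$, so $N^{11/4}R^{-2}\ge N^{11/4-1/10}=N^{53/20}$, which is \emph{larger} than $N^2$, not smaller; and for smaller $R$ the term is larger still. Moreover, in the statement of Lemma~\ref{lem37} the quantity $O(N^2L^{-C})$ is a \emph{factor multiplying} the truncated singular-series sum, which by Lemmas~\ref{lem38} and~\ref{lem39} is of size $\asymp\sigma(N;r)\gg R^{-2}/\log R$; so the total error budget is only about $N^2L^{-C}R^{-2}/\log R$, and absorbing $N^{11/4}R^{-2}$ would require $N^{3/4}\log R\ll L^{-C}$, which is plainly false. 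In fairness, the paper's own proof of Lemma~\ref{lem37} silently drops the $O(N^{11/4}R^{-2})$ term without comment, so this is a latent issue in the source as well; the real fix is that the error in Lemma~\ref{lem36}, if traced through honestly (measure of $E_1(R)$ is $\ll P/Q\ll R^6L^{7C}/N$, each non-principal factor is $\ll N^{1/2+o(1)}R^3L^{4C}$, trivial factors are $\ll NR^{-1}$), is in fact $\ll N^{37/20+o(1)}$, which \emph{does} fit inside $N^2L^{-C}R^{-2}/\log R$ when $R\le N^{1/20}$. As written, though, the stated bound $O(N^{11/4}R^{-2})$ is too weak to justify the step, and your appeal to $R\le N^{1/20}$ does not rescue it.
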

\begin{proof}
We obtain by Lemmas \ref{lem35} and \ref{lem36}:
\begin{align*}
\int_0&=\sum_{q\leq P}\sum_{a=1}^q f(r, q, a, b_1)f(r, q, a, b_2)f(r, q, a, b_3)
)e\left(-\frac{aN}{q}\right)\\
&\ \ \times \int_{|\lambda|\leq 1/qQ}\left( \sum_{n\leq N} e(n\lambda) \right)^3 e(-N\lambda)d\lambda+O(N^{11/4}R^{-2})\:.
\end{align*}
Using the estimate 
$$\sum_{n\leq N}e(n\lambda)\ll \min\left(N,\frac{1}{|\lambda|}\right)$$
one sees that the integral is 
\begin{align*}
\int_{-1/2}^{1/2}\left(\sum_{n\leq N}e(n\lambda)\right)^3e(-N\lambda)d\lambda+
O\left(\int_{1/qQ}^{1/2}\lambda^{-3}d\lambda\right)
&=\sum_{\substack{n_1+n_2+n_3=N\\ 1\leq n_j\leq N}}1+O((qQ)^2)\\
&=\frac{1}{2}N^2+O(N^2L^{-C})\:.
\end{align*}
We thus have proved Lemma \ref{lem37}.
\end{proof}
\begin{lemma}\label{lem38}
Let $b_1+b_2+b_3\equiv 0 (\bmod\: r)$. Then the singular series
$$\sigma(N;r):=\sum_{q=1}^\infty\sum_{a=1}^q f(r, q, a, b_1)f(r, q, a, b_2)f(r, q, a, b_3)
)e\left(-\frac{aN}{q}\right)$$
converges and has the value
$$\frac{1}{\phi^3(r)}\sum_{q=1}^\infty\frac{\mu(q/h)}{\phi^3(q/h)}\sum_{\substack{a=1\\ (a,q)=1}}^qe\left(\frac{a(b_1+b_2+b_3)t}{h}-\frac{aN}{q}  \right)\:.$$
We also have
\begin{align*}
\sigma(N;r)&=\frac{C(r)}{r^2}\prod_{p\mid r}\frac{p^3}{(p-1)^3+1}\prod_{\substack{p\mid N \\ p\nmid r}}\frac{(p-1)((p-1)^2+1)}{(p-1)^3+1} \prod_{p>2}\left(1+\frac{1}{(p-1)^3}\right)\\
&\geq \frac{1}{r^2\log r}\:.
\end{align*}
\end{lemma}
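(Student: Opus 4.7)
The plan is first to simplify the triple product $f(r,q,a,b_1)f(r,q,a,b_2)f(r,q,a,b_3)$. By Definition 3.4 this vanishes unless $(q/h,h)=1$; when nonzero, using $\mu^3=\mu$, it equals $\mu(q/h)/\phi(rq/h)^3\cdot e(a(b_1+b_2+b_3)t/h)$. Since $h\mid r$, the hypothesis $r\mid b_1+b_2+b_3$ kills the phase factor. A brief check shows $(q/h,h)=1$ forces $(q/h,r)=1$ (any prime dividing both $r$ and $q/h$ would also divide $h$), so $\phi(rq/h)=\phi(r)\phi(q/h)$. The inner $a$-sum is then the Ramanujan sum $c_q(N)=\sum_{(a,q)=1}e(-aN/q)$, which yields the first displayed identity of the lemma.

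Second, I would reparametrize the $q$-sum: the conditions $h=(r,q)$ and $(q/h,h)=1$ are equivalent to writing $q=q_1q_2$ with $q_1\mid r$ and $(q_2,r)=1$, in which case $h=q_1$ and $q/h=q_2$. Multiplicativity of $c_q$ gives $c_{q_1q_2}(N)=c_{q_1}(N)c_{q_2}(N)$, so
\[
\sigma(N;r)=\frac{1}{\phi^3(r)}\Bigl(\sum_{q_1\mid r}c_{q_1}(N)\Bigr)\Bigl(\sum_{(q_2,r)=1}\frac{\mu(q_2)c_{q_2}(N)}{\phi^3(q_2)}\Bigr).
\]
The second bracket is an absolutely convergent Euler product $\prod_{p\nmid r}(1-c_p(N)/(p-1)^3)$, which proves convergence of the original series. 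Using $c_p(N)=p-1$ for $p\mid N$ and $c_p(N)=-1$ otherwise, the local factors are $1-1/(p-1)^2$ and $1+1/(p-1)^3$ respectively.

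Third, I would rearrange into the claimed form. Pulling out the universal Euler product $\prod_{p>2}(1+1/(p-1)^3)$, the algebraic identities $(p-1)^3+1=p(p^2-3p+3)$ and $(p-1)((p-1)^2+1)=(p-1)(p^2-2p+2)$ convert the $p\mid N$, $p\nmid r$ local factor into $(p-1)((p-1)^2+1)/((p-1)^3+1)$. The prime piece of $\sum_{q_1\mid r}c_{q_1}(N)$, evaluated via $c_{p^i}(N)=\phi(p^i)$ when $p^i\mid N$, $c_{p^i}(N)=-p^{i-1}$ when $p^{i-1}\|N$, and $0$ otherwise, combines with $\phi^3(r)$ in the denominator to produce the factor $p^3/((p-1)^3+1)$ for each $p\mid r$, the residual $N$- and $v_p(r)$-dependence being absorbed into $C(r)/r^2$. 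For the lower bound: the universal product is a positive absolute constant; each $p^3/((p-1)^3+1)\geq 1$; and $\prod_{p\mid N,p\nmid r}(p-1)((p-1)^2+1)/((p-1)^3+1)\gg\prod_{p}(1-2/p^2)\gg 1$. A Mertens-type estimate controlling $\prod_{p\mid r}(1-1/p)^{-3}$ then produces the remaining $1/\log r$ loss.

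The main obstacle is the third step: the prime-by-prime identification at $p\mid r$ requires case analysis on $v_p(N)$, summing the geometric-type series $\sum_{i=0}^{v_p(r)}c_{p^i}(N)$, and verifying that after division by the corresponding power of $(p-1)$ coming from $\phi^3(r)$ the result collapses to $p^3/((p-1)^3+1)$ times a bounded quantity absorbable into $C(r)$. Once this bookkeeping is settled, the lower bound reduces to the Mertens-type observation above.
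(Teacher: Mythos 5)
The paper's ``proof'' is a one-line citation to Rademacher, so your from-scratch computation is a genuinely different route. Your steps 1 and 2 are sound \emph{as you set them up}: $(q/h,h)=1$ does force $(q/h,r)=1$, the reparametrization $q=q_1q_2$ with $q_1=h\mid r$, $(q_2,r)=1$ is a bijection, and multiplicativity of the Ramanujan sum then gives
\[
\sigma(N;r)=\frac{1}{\phi^3(r)}\Bigl(\sum_{q_1\mid r}c_{q_1}(N)\Bigr)\Bigl(\sum_{(q_2,r)=1}\frac{\mu(q_2)c_{q_2}(N)}{\phi^3(q_2)}\Bigr).
\]

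The gap is in step 3, and it is fatal as written. In the application $r=P^*$ is squarefree and $(N,r)=1$, so for each $p\mid r$ one has $p\nmid N$ and hence the local piece of the first bracket is $c_1(N)+c_p(N)=1+(-1)=0$. Thus your factorization yields $\sigma(N;r)=0$ identically, which flatly contradicts the lower bound $\sigma(N;r)\geq 1/(r^2\log r)$ you are trying to prove. Your claim that this local sum ``combines with $\phi^3(r)$ to produce $p^3/((p-1)^3+1)$'' cannot be carried out because the numerator vanishes.

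The source of the trouble is the very first move, ``the hypothesis $r\mid b_1+b_2+b_3$ kills the phase factor.'' That is exactly what produces the degenerate Ramanujan sum $c_q(N)$ and forces $\sigma=0$. The hypothesis as stated in the paper is almost certainly a misprint: in the actual application (Section 4 of the paper) one has $b_1+b_2+b_3=N$, not $\equiv 0\pmod r$, and the version consistent with Rademacher and with the claimed positive Euler product is $b_1+b_2+b_3\equiv N\pmod r$. Under that hypothesis the phase $e\bigl(a(b_1+b_2+b_3)t/h - aN/q\bigr)$ does \emph{not} collapse to $e(-aN/q)$; it depends nontrivially on both the $h$-component and the $q/h$-component of $a$, and the inner $a$-sum must be split by CRT into a product of a complete exponential sum mod $h$ and a Ramanujan-type sum mod $q/h$. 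Your reduction to $\sum_{q_1\mid r}c_{q_1}(N)$ replaces the genuinely nontrivial local factor at $p\mid r$ (which Rademacher computes to be $p^3/((p-1)^3+1)$, up to bounded quantities) with the wrong object. So either you adopt the corrected hypothesis and redo the $a$-sum via CRT before any Euler factorization, or the lemma as literally stated has no proof.
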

\begin{proof}
This is due to Rademacher \cite{Rademacher}
\end{proof}
\begin{lemma}\label{lem39}
Let $\epsilon>0$. Then we have
$$\frac{1}{\phi^3(r)}\sum_{\substack{q>P\\ (q/h, h)=1}}\frac{\mu(q/h)}{\phi^3(q/h)}\sum_{\substack{a=1 \\ (a, q)=1}}^q e\left(\frac{a(b_1+b_2+b_3)t}{h}-\frac{aN}{q}\right)
=O\left(\frac{r^{2+\epsilon}}{\phi^3(r)}P^{-1}\right)\:.$$
\end{lemma}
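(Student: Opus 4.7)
The approach is to reduce the inner character sum to a classical Ramanujan sum and then carry out a routine tail estimate. Under the standing hypothesis $b_1+b_2+b_3\equiv 0\pmod r$, and because $h=(r,q)$ divides $r$, we have $h\mid b_1+b_2+b_3$, hence $a(b_1+b_2+b_3)t/h\in\Z$ for every integer $a$, so $e(a(b_1+b_2+b_3)t/h)=1$. The inner sum therefore collapses to
$$c_q(-N)=\sum_{\substack{a=1\\(a,q)=1}}^q e(-aN/q),$$
for which the standard bound $|c_q(N)|\leq (q,N)$ is available.

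Next I would parameterize $q=hq'$ with $h\mid r$ and $(q',h)=1$; the factor $\mu(q/h)=\mu(q')$ restricts the sum to squarefree $q'$. The multiplicativity of the Ramanujan sum gives $|c_q(N)|\leq |c_h(N)|\cdot |c_{q'}(N)|$, and for a squarefree modulus the explicit formula $c_{q'}(N)=\mu(q'/(q',N))\phi(q')/\phi(q'/(q',N))$ yields $|c_{q'}(N)|\leq\phi((q',N))$, which is the crucial improvement over the naive bound $(q',N)$. Writing $d=(q',N)$ and $q''=q'/d$ (so that $q''$ is squarefree and coprime to $dh$) reduces the sum under consideration to a triple sum over $h\mid r$, $d\mid N$ with $(d,h)=1$, and $q''>P/(hd)$.

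The third step is the tail estimation proper. Applying the elementary bound $\phi(m)\gg m^{1-\delta}$, the innermost sum over $q''>P/(hd)$ is bounded by comparison with $\sum_{n>X}n^{-3+\delta}$, giving $O((hd/P)^{2-\delta})$. Summing over $d\mid N$ contributes at worst a factor $\tau(N)\ll N^{o(1)}$, and the outer sum over $h\mid r$ with $|c_h(N)|\leq (h,N)\leq h$ contributes at most $O(r^{2+o(1)})$. Finally I would invoke the relations $r\leq 2R\leq 2N^{1/20}$ and $P=R^3L^{3C}$ to absorb the stray $N^{o(1)}$ into the $\epsilon$-slack of the target bound, yielding $O(r^{2+\epsilon}\phi^{-3}(r)P^{-1})$ as required.

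The main obstacle I anticipate is the bookkeeping with the gcd factors: the $(q,N)$ implicit in the Ramanujan bound creates an $N^{o(1)}$ dependence via $\tau(N)$ that is not obviously absorbable when $R$ is only slightly larger than $1$, so that $P$ is not much bigger than $L^{3C}$. The saving that makes everything fit is precisely the sharper estimate $|c_{q'}(N)|\leq\phi((q',N))$ for squarefree $q'$, which produces enough decay in $d$ to cancel against the rough bounds on $\tau(N)$ when combined with $R\leq N^{1/20}$.
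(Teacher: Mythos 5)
Your route is genuinely different from the paper's and considerably more elaborate. You identify the inner sum (under the additional hypothesis $b_1+b_2+b_3\equiv 0\pmod r$, which is assumed in Lemma \ref{lem38} but is \emph{not} among the stated hypotheses of Lemma \ref{lem39}) as the Ramanujan sum $c_q(-N)$, then exploit multiplicativity and the explicit evaluation $c_{q'}(N)=\mu(q'/(q',N))\phi(q')/\phi(q'/(q',N))$ for squarefree $q'$. The paper does none of this: it simply bounds the inner exponential sum trivially by $q$ (the number of summands), uses $\phi(q/h)=\phi(q)/\phi(h)$ together with $m/\phi(m)=O(\log\log m)$ and $\tau(r)\ll r^{\epsilon}$, and sums the resulting tail $\sum_{\tilde q>P/h}\tilde q^{-2}\ll h/P$ over divisors $h\mid r$. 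Nothing finer than the trivial bound is needed, and the trivial bound works regardless of any congruence condition on $b_1+b_2+b_3$.

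There is also a genuine gap in your write-up that you yourself flag but do not resolve. Passing to $|c_{q'}(N)|\le\phi((q',N))$ introduces a sum over divisors $d\mid N$, and your estimate of that sum produces a factor of $\tau(N)\ll N^{o(1)}$. This factor cannot in general be absorbed into the $r^{\epsilon}$-slack of the target bound $O(r^{2+\epsilon}\phi^{-3}(r)P^{-1})$: the lemma is stated uniformly in $r\le 2R$ with no lower bound on $r$, and for small $R$ one has $P=R^3L^{3C}\ll (\log N)^{O(1)}$, whereas $\tau(N)$ can be as large as $\exp\bigl(c\log N/\log\log N\bigr)$, which dwarfs any fixed power of $\log N$. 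The relation $r\le 2R\le 2N^{1/20}$ that you invoke is an \emph{upper} bound on $r$ and therefore cannot supply the lower bound on $r$ (or on $P$) that would be needed to absorb $N^{o(1)}$; the ``sharper estimate $|c_{q'}(N)|\le\phi((q',N))$'' does not remove the divisor sum over $d\mid N$ and hence does not remove the $\tau(N)$ factor. In short: the Ramanujan-sum machinery is overkill here, it imports an unnecessary dependence on the arithmetic of $N$ via $\tau(N)$, and the trivial estimate already suffices.
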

\begin{proof}
We use the estimates 
$$\sum_{\substack{a=1 \\ (a, q)=1}}^q e\left(\frac{a(b_1+b_2+b_3)t}{h}-\frac{aN}{q}\right)\leq q\:,$$
$$d(m)\leq m^\epsilon\ \text{for the divisor function}$$ 
and
$$\frac{m}{\phi(m)}=O(\log\log m)\:.$$
We obtain
\begin{align*}
&\frac{1}{\phi^3(r)}\sum_{\substack{q>P\\ (q/h, h)=1}}\frac{\mu(q/h)}{\phi^3(q/h)}\sum_{\substack{a=1 \\ (a, q)=1}}^q e\left(\frac{a(b_1+b_2+b_3)t}{h}-\frac{aN}{q}\right)\\
&\leq \frac{1}{\phi^3(r)} \sum_{\substack{q>P\\ (q/h, h)=1}}\frac{1}{\phi^3(q/h)}\cdot q \\
&\leq \frac{1}{\phi^3(r)} \sum_{h\mid r} \phi^3(h)\left(\sum_{\substack{q>P\\ h\mid q}}\frac{1}{q^2}\right)\log\log q\\
&\leq \frac{1}{\phi^3(r)} \sum_{h\mid r} \phi^3(h)\frac{1}{h^2}\sum_{\tilde{q}>P/h}\frac{1}{\tilde{q}^{\:2}}\\
&\leq \frac{1}{\phi^3(r)}  P^{-1}\sum_{h\mid r} h^2=O\left( \frac{r^{2+\epsilon}}{\phi^3(r)}\: P^{-1} \right)\:.
\end{align*}
\end{proof}
\begin{theorem}\label{thm3}
Let 
$$R\leq N^{1/20},\ b_1+b_2+b_3=0,\ \frac{R}{2}\leq R\leq 2R\:.$$ 
Then 
$$\sum_{\substack{(n_1, n_2, n_3)\in\mathbb{N}^3\\ n_i\equiv b_i(\mod\: r)\\ n_1+n_2+n_3=N}}\Lambda(n_1)\Lambda(n_2)\Lambda(n_3)=\frac{1}{2}\sigma(N;r)N^2(1+o(1))$$
as $N\rightarrow \infty$.
\end{theorem}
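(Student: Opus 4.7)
The plan is to evaluate the representation count via the orthogonality identity (3.6) and the Hardy–Littlewood circle method, splitting the integral as
$$\int_0^1 = \int_{E_1(R)} + \int_{E_2(R)},$$
and showing that the minor arc contribution is negligible while the major arc contribution gives $\tfrac{1}{2}\sigma(N;r)N^2(1+o(1))$.

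First I would dispose of the minor arcs. On $E_2(R)$, Lemma \ref{lem31} (which rests on the Balog–Perelli bound and GRH) gives $|S(\alpha,r,b_i)|\ll N r^{-1}L^{-A}$ for any fixed $A>0$, uniformly in $\tfrac{1}{2}R\le r\le 2R$. Combining the supremum estimate for one factor with a Parseval–type $L^2$ estimate for the other two,
$$\int_0^1|S(\alpha,r,b_i)|^2\,d\alpha=\sum_{\substack{n\le N\\ n\equiv b_i(r)}}\Lambda(n)^2\ll\frac{N\log N}{r},$$
we obtain by Cauchy–Schwarz
$$\int_{E_2(R)}\prod_{i=1}^3 S(\alpha,r,b_i)\,d\alpha\ll \frac{N}{rL^{A}}\cdot\frac{N\log N}{r}\ll\frac{N^2}{r^2L^{A-1}}.$$
Since by Lemma \ref{lem38} the singular series satisfies $\sigma(N;r)\gg 1/(r^2\log r)$, choosing $A$ large enough (which fixes $C$ in Definition \ref{def31}) makes this contribution $o(\sigma(N;r)N^2)$.

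Next I would handle the major arcs. By Lemma \ref{lem36} we may replace each $S(\alpha,r,b_i)$ on $E_1(R)$ by its principal-character piece $S_0(\alpha,r,b_i)$ at the cost of $O(N^{11/4}R^{-2})$, which is $o(N^2 r^{-2}\log^{-1}r)$ since $R\le N^{1/20}$. Lemma \ref{lem37} then yields
$$\int_0 = \Bigl(\tfrac{1}{2}N^2+O(N^2L^{-C})\Bigr)\sum_{q\le P}\sum_{\substack{a=1\\(a,q)=1}}^q\prod_{i=1}^3 f(r,q,a,b_i)\,e\!\left(-\tfrac{aN}{q}\right)+O(N^{11/4}R^{-2}).$$
Using the hypothesis $b_1+b_2+b_3\equiv 0\pmod r$, the inner sum matches the truncation of the singular series of Lemma \ref{lem38}; Lemma \ref{lem39} bounds the missing tail by $O(r^{2+\epsilon}\phi^{-3}(r)P^{-1})$, which is absorbed into the error once $P=R^{3}L^{3C}$ is large compared with $r^{2+\epsilon}$. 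Assembling these pieces gives the claimed asymptotic $\tfrac{1}{2}\sigma(N;r)N^2(1+o(1))$.

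The main obstacle is the minor-arc estimate, since here the contribution has to beat the singular-series lower bound $N^2/(r^2\log r)$ \emph{uniformly} for $r$ as large as $N^{1/20}$. The gain of a single factor $L^{-A}$ from Lemma \ref{lem31} is what permits this: without GRH one would lose control of the exponential sum for $q$ in the range $P<q<Q$ with $r$ that large. Handling the small numerical loss when $h=(r,q)$ is nontrivial in the Balog–Perelli bound requires care, but since $r\le 2R$ and $q>P=R^3L^{3C}$ force $h\le r\ll q^{1/3}$, each of the three terms in the Balog–Perelli estimate remains of size $O(N r^{-1} L^{-A})$ for suitable $C=C(A)$, and the argument proceeds as described.
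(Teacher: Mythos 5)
Your overall architecture — orthogonality identity (3.6), decomposition into $E_1(R)\cup E_2(R)$, minor arcs via Lemma \ref{lem31}, major arcs via Lemmas \ref{lem36}, \ref{lem37}, \ref{lem38}, \ref{lem39} — is exactly the one the paper uses. The one place where you add genuine content is the minor-arc step: you spell out the standard $L^\infty$--$L^2$ argument, bounding one factor in sup-norm by Lemma \ref{lem31} and the other two by Parseval and Cauchy--Schwarz via $\int_0^1 |S(\alpha,r,b)|^2\,d\alpha \ll N\log N/\phi(r)$, which the paper silently assumes. That part is correct (modulo replacing $r$ by $\phi(r)$, which only costs a $\log\log r$), and it is the right way to beat the singular-series lower bound $\sigma(N;r) \gg r^{-2}(\log r)^{-1}$ uniformly for $r$ up to $N^{1/20}$, provided $A$ (hence $C$) is chosen large enough.

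However, there is a numerical assertion in your major-arc step that does not hold as written, and it is worth flagging because the paper itself is ambiguous at the same spot. You claim that the error $O(N^{11/4}R^{-2})$ from Lemma \ref{lem36} is $o(N^2 r^{-2}\log^{-1}r)$ because $R\le N^{1/20}$. With $r\asymp R$, the ratio is
\[
\frac{N^{11/4}R^{-2}}{N^2 R^{-2}(\log R)^{-1}} \;=\; N^{3/4}\log R \;\longrightarrow\; \infty,
\]
so $N^{11/4}R^{-2}$ actually dominates $N^2 r^{-2}(\log r)^{-1}$ for every admissible $R$, rather than being negligible. Taken at face value, the exponent $11/4$ in Lemma \ref{lem36} is incompatible with the conclusion of Theorem \ref{thm3}; a correct bookkeeping of the GRH bound on $\sum_n \xi\eta(n)\Lambda(n)e(n\lambda)$ together with the measure $\ll P/Q \ll R^6 L^{7C}/N$ of $E_1(R)$ and the trivial bound $N/\phi(r)$ on the remaining two factors yields an error more like $O(N^{3/2}R^4 L^{O(1)})$, which is indeed $o(N^2 r^{-2}(\log r)^{-1})$ when $R\le N^{1/20}$. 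So the step you quote needs this repair; you cannot simply accept the stated exponent and declare the error small. Aside from this point, the remaining pieces (inserting the hypothesis $b_1+b_2+b_3\equiv 0 \pmod r$ to match the singular series of Lemma \ref{lem38}, and bounding the tail $q>P$ by Lemma \ref{lem39}) are assembled correctly.
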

\begin{proof}
This follows from (3.6), Lemma \ref{lem31} and Lemmas \ref{lem37}, \ref{lem38},
\ref{lem39}.
\end{proof}
\section{Conclusion}
Theorem \ref{main} now follows if we apply Theorem \ref{thm3} with 
$$b_1=b_2=u_0,\ b_3=N-2u_0,\ r=P^*\:.$$
\vspace{10mm}

\end{document}